\providecommand{\U}[1]{\protect\rule{.1in}{.1in}}
\newtheorem{theorem}{Theorem}[section]
\newtheorem{proposition}[theorem]{Proposition}
\newtheorem{example}[theorem]{Example}
\newtheorem{remark}[theorem]{Remark}
\newtheorem{lemma}[theorem]{Lemma}
\newtheorem{final remark}[theorem]{Final Remark}
\newtheorem{definition}[theorem]{Definition}
\begin{document}

\title{Generalized adjoints and applications to composition operators}
\author{Geraldo Botelho\thanks{Supported by CNPq Grant
305958/2014-3 and Fapemig Grant PPM-00450-17.}\,\, and  Leodan A. Torres\thanks{Supported by a CNPq scholarship.
}}
\date{}
\maketitle

\begin{abstract} We generalize the classical notion of adjoint of a linear operator and the Aron-Schottenloher notion of adjoint of a homogeneous polynomial. The general notion is shown to enjoy several properties enjoyed by the classical ones, nevertheless differences between the two theories are detected. The proofs of both positive and negative results are not simple adaptations of the linear cases, actually nonlinear arguments are often required. Applications of the generalized adjoints to Lindstr\"om-Schl\"uchtermann type theorems for composition operators are provided.
\end{abstract}

\section*{Introduction and background}

The adjoint (dual, conjugate or transpose) $u^* \colon F^* \longrightarrow E^*$ of a bounded linear operator $u \colon E \longrightarrow F$ between Banach spaces is a central notion in Linear Functional Analysis. Extending this notion to the nonlinear setting, Aron and Schottenloher \cite{aron} defined the adjoint $P^*$ of a continuous homogeneous polynomial $P$, which instantly became a basic tool in Nonlinear Functional Analysis and in Infinite Dimensional Holomorphy. Recent applications of the adjoint of a homogeneous polynomial can be found, e.g., in \cite{erhanpilar, silvia, pabloturco}.

Our purpose in this paper is to show that these adjoints are particular cases of a much more general notion, which we call generalized adjoints (cf. Definition \ref{def1}). In Section 1 we develop the first properties of these generalized adjoints, establishing, on the one hand, that many features of the classical theories are actually particular cases of the general theory. On the other hand, a crucial difference between the two theories is detected (cf. Proposition \ref{w.rcp} and Example \ref{exdif}), making clear that there is room for further research in the general theory.

Due to the strong nonlinear flavor of the general theory, it is expected that canonical linear arguments do not work in the general setting. Of course this is true, nonlinear arguments are required throughout the paper; but, as important as that, the general setting also discloses phenomena that could not be discovered in the classical theory (cf. Proposition \ref{delta.ida}), reinforcing the pertinence of future research in the subject.

In Section 2 we show how the generalized adjoints can be useful by  proving nonlinear versions of some linear results due to Lindstr\"om and Schl\"uchtermann \cite{lindstrom} on composition operators, which, in particular, recover the original results as particular cases.

By $E$ and $F$ we denote real or complex Banach spaces, $E^*$ denotes the topological dual of $E$, $B_E$ stands for the closed unit ball of $E$ and $J_E \colon E \longrightarrow E^{**}$ is the canonical embedding. The symbols ${\cal L}(E;F)$ and ${\cal P}(^mE;F)$ denote the Banach spaces of continuous linear operators and continuous $m$-homogeneous polynomials from $E$ to $F$, $m \in \mathbb{N}$, endowed with the usual sup norm. When $F$ is the scalar field $\mathbb{K} = \mathbb{C}$ or $\mathbb{R}$ we simply write ${\cal P}(^mE)$. The Aron-Schottenloher adjoint of a continuous $m$-homogeneous polynomial $P \in {\cal P}(^mE;F)$ is the following linear operator:
$$P^* \colon F^* \longrightarrow {\cal P}(^mE)~,~ P^*(y^*) = y^* \circ P. $$
By $\check{P}$ we denote the (unique) continuous symmetric $m$-linear operator from $E^m$ to $F$ that generates the polynomial $P \in {\cal P}(^mE;F)$. The following well known formula shall be used several times: for $x \in E$ and $m \in \mathbb{N}$,
\begin{equation} \label{formula}\|x\|^m=\sup_{q\in B_{\mathcal{P}(^m E)}} |q(x)|.
\end{equation}
For the general theory of (spaces of) homogeneous polynomials between Banach spaces we refer to \cite{livrodineen, mujicalivro}.


%

\section{Generalized adjoints}

We start by defining the generalized adjoints.

\begin{definition}\label{def1}\rm Let $m,n,k$ be given natural numbers.  Given a continuous $m$-homogeneous
polynomial $P \in \mathcal{P}(^m E; F)$, define
$$\Delta^n_k P\colon \mathcal{P}(^k F)\longrightarrow\mathcal{P}(^{mnk} E)~,~ \Delta ^n_k P(q)(x)=q(P(x))^n.$$
\end{definition}
This concept recovers the classical adjoint of a linear operator and the Aron-Schottenloher adjoint of a homogeneous polynomial as follows:\\
\indent $\bullet$ For $u \in {\cal L}(E;F)$, $u^* = \Delta_1^1 u$.\\
\indent $\bullet$ For $P \in {\cal P}(^mE;F)$, $P^* = \Delta_1^1 P$.

\begin{proposition}\label{pripro}{\rm (a)} $\Delta^n_k P$ is a well defined continuous $n$-homogeneous polynomial, that is, $\Delta^n_k P \in {\cal P}(^n \mathcal{P}(^k F),\mathcal{P}(^{mnk} E)),$ and $\|\Delta^n_k P\| = \|P\|^{kn}$.\\
{\rm (b)} $\Delta_k^n$ is a continuous $kn$-homogeneous polynomial, that is, $$\Delta_k^n \in {\cal P}\left(^{kn}{\cal P}(^mE;F); {\cal P}(^n {\cal P}(^kF); {\cal P}(^{mkn}E)) \right),$$
and $\|\Delta_k^n\| = 1$.
\end{proposition}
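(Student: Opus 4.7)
For part (a), I would proceed in three steps. First (well-definedness), given $q \in \mathcal{P}(^kF)$, the composition $q \circ P$ lies in $\mathcal{P}(^{mk}E)$, and its pointwise $n$-th power is an element of $\mathcal{P}(^{mnk}E)$, so $\Delta^n_kP(q)$ takes values in the stated polynomial space. Second (polynomiality in $q$), I would exhibit the associated symmetric $n$-linear map
$$B(q_1,\ldots,q_n)(x) := q_1(P(x))\cdots q_n(P(x)),$$
whose diagonal reproduces $\Delta^n_kP(q)$; symmetry, $n$-linearity in the $q_i$, and the fact that each $B(q_1,\ldots,q_n) \in \mathcal{P}(^{mnk}E)$ are immediate from the pointwise definition. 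Third (norm), the identity follows from exchanging two suprema and applying formula \eqref{formula}:
\begin{align*}
\|\Delta^n_kP\| &= \sup_{q\in B_{\mathcal{P}(^kF)}}\sup_{x\in B_E}|q(P(x))|^n \\
&= \sup_{x\in B_E}\Bigl(\sup_{q\in B_{\mathcal{P}(^kF)}}|q(P(x))|\Bigr)^{n} = \sup_{x\in B_E}\|P(x)\|^{kn} = \|P\|^{kn},
\end{align*}
which simultaneously provides continuity of $\Delta^n_kP$ and the sharp norm.

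For part (b), granted (a), the target space is correct and $\|\Delta^n_k(P)\| = \|P\|^{kn}$, so
$$\|\Delta^n_k\| = \sup_{\|P\|\le 1}\|\Delta^n_k(P)\| = \sup_{\|P\|\le 1}\|P\|^{kn} = 1$$
is immediate. The $kn$-homogeneity in $P$ is the computation $q((\lambda P)(x))^n = q(\lambda P(x))^n = \lambda^{kn}q(P(x))^n$. For polynomiality, I would write down the symmetric $kn$-linear map
$$A(P_1,\ldots,P_{kn})(q)(x) := \frac{1}{(kn)!}\sum_{\sigma\in S_{kn}}\prod_{j=1}^{n}\check q\bigl(P_{\sigma((j-1)k+1)}(x),\ldots,P_{\sigma(jk)}(x)\bigr),$$
where $\check q$ is the symmetric $k$-linear form generating $q$. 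Symmetry and $kn$-linearity in the $P_i$ are built into the averaging together with the $k$-linearity of $\check q$ and the linearity of point-evaluation $P\mapsto P(x)$; on the diagonal $P_1=\cdots=P_{kn}=P$, each summand collapses to $\check q(P(x),\ldots,P(x))^n = q(P(x))^n = \Delta^n_k(P)(q)(x)$.

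The main obstacle I anticipate is the polynomiality verification in part (b): one must check that the symmetric form $A$ genuinely takes values in the iterated polynomial space $\mathcal{P}(^n\mathcal{P}(^kF);\mathcal{P}(^{mkn}E))$. This reduces to the observation that $q\mapsto\check q$ is linear (by polarization), so that $q\mapsto A(P_1,\ldots,P_{kn})(q)(x)$ is a product of $n$ linear functionals in $q$, hence an $n$-homogeneous polynomial in $q$ with values in $\mathcal{P}(^{mkn}E)$. Once this is in place, the norm bookkeeping and continuity follow directly from part (a), so no further computation is needed.
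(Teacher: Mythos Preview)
Your proposal is correct and follows essentially the same strategy as the paper: the norm computation in (a) is identical, and in (b) the paper exhibits the same $kn$-linear generator $A(P_1,\ldots,P_{kn})(q)(x)=\prod_{j=1}^{n}\check q\bigl(P_{(j-1)k+1}(x),\ldots,P_{jk}(x)\bigr)$ without bothering to symmetrize. Your explicit averaging over $S_{kn}$ is harmless but unnecessary, since any (not necessarily symmetric) continuous $kn$-linear map whose diagonal equals $\Delta_k^n$ already certifies polynomiality.
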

This result generalizes the following facts: $u^* \in {\cal L}(E;F)$, $P^* \in {\cal L}(F^*; {\cal P}(^mE))$, $\|u^*\| = \|u\|$, $\|P^*\| = \|P\|$, and the correspondences $u \mapsto u^*$ and $P \mapsto P^*$ are norm 1 linear operators.
\begin{proof} (a) We just prove the norm equality. Calling on (\ref{formula}) for the first time,
\begin{align*}
\|\Delta^n_k P\|& =\sup_{\substack{\|q\|\leq1 }}\| \Delta^n_k P(q)\| =\sup_{\substack{\|q\|\leq1 }}\sup_{\substack{\|x\|\leq1 }}| \Delta^n_k P(q)(x)|=\sup_{\substack{\|q\|\leq1 }}\sup_{\substack{\|x\|\leq1 }}| q(P(x))|^ n\\
&=\sup_{\substack{\|x\|\leq1 }}\sup_{\substack{\|q\|\leq1 }} | q(P(x))|^n=\sup_{\substack{\|x\|\leq1 }}\|P(x)\| ^{kn}=\|P\|^{kn}.
\end{align*}
(b) Use (a) and note that the map $A \colon {\cal P}(^mE;F)^{kn} \longrightarrow {\cal P}(^n {\cal P}(^kF); {\cal P}(^{mkn}E))$ given by
$$A(P_1,\ldots,P_k,\ldots,P_{nk})(q)(x)=\check{q}(P_1(x),\ldots, P_k(x))\cdots\check{q}(P_{(n-1)k+1}(x),\ldots, P_{kn}(x)),$$
is a continuous $kn$-linear operator that generates $\Delta_k^n$.   
\end{proof}

To describe the behavior of $\Delta_k^n$ with respect to the algebraic operations, we use the following lemma.

\begin{lemma}\label{prilema} Given a polynomial $R \in {\cal P}(^m E;F)$, there is a polynomial $W_{R} \in {\cal P}(^m(E \times E); F)$ such that:\\
{\rm (a)} $R(x + y) = R(x) + R(y) + W_{R}(x,y)$ for all $x,y \in E$.\\
{\rm (b)} $W_{R}= 0$ if and only if $m = 1$.
\end{lemma}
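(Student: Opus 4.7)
The plan is to take the most direct definition possible, namely $W_R(x,y) := R(x+y) - R(x) - R(y)$, so that part (a) is satisfied by construction, and then to verify that this $W_R$ actually lies in $\mathcal{P}(^m(E \times E); F)$, finally addressing part (b) by a short calculation.

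For the polynomial membership, I would invoke the standard fact that precomposing an $m$-homogeneous polynomial with a continuous linear operator produces an $m$-homogeneous polynomial. The three maps $\sigma, \pi_1, \pi_2 \colon E \times E \longrightarrow E$ given by $\sigma(x,y) = x+y$, $\pi_1(x,y) = x$ and $\pi_2(x,y) = y$ are continuous and linear, so $R \circ \sigma$, $R \circ \pi_1$ and $R \circ \pi_2$ all lie in $\mathcal{P}(^m(E \times E); F)$, and since this space is a vector space, so is $W_R = R \circ \sigma - R \circ \pi_1 - R \circ \pi_2$. A more explicit alternative would be to invoke the Newton-type expansion $R(x+y) = \sum_{j=0}^{m} \binom{m}{j} \check{R}(x^j, y^{m-j})$ and peel off the $j = 0$ and $j = m$ terms, producing the formula $W_R(x,y) = \sum_{j=1}^{m-1} \binom{m}{j} \check{R}(x^j, y^{m-j})$; both routes yield the same object, but the composition argument is cleaner.

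For part (b), the direction $m = 1 \Rightarrow W_R = 0$ is immediate from linearity of $R$. For the converse, I would specialize to the diagonal: $W_R(x,x) = 0$ yields $R(2x) = 2R(x)$, while $m$-homogeneity of $R$ gives $R(2x) = 2^m R(x)$, so $(2^m - 2)R(x) = 0$ for every $x \in E$. Assuming tacitly that $R \neq 0$ (otherwise $W_R \equiv 0$ regardless of $m$), this forces $2^m = 2$ and hence $m = 1$.

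The argument is essentially bookkeeping and I anticipate no genuine obstacle; the only real content is the recognition that $W_R$ inherits $m$-homogeneity from $R$ via composition with continuous linear maps. The lemma's role is clearly preparatory: it supplies a convenient notation for the Newton-type correction term that should surface when analyzing how the operator $\Delta_k^n$ distributes over sums of polynomials in the sequel.
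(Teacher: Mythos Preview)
Your proof is correct and essentially matches the paper's. The only cosmetic difference is that the paper verifies $W_R \in \mathcal{P}(^m(E\times E);F)$ by exhibiting an explicit generating $m$-linear form (your ``more explicit alternative'' via the binomial expansion), whereas your primary route through composition with the linear maps $\sigma, \pi_1, \pi_2$ is a tidy shortcut yielding the same object; part (b), including the tacit assumption $R \neq 0$ and the diagonal evaluation $R(2x)=2R(x)$, is argued identically in both.
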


\begin{proof} Assume, wlog, that $R \neq 0$. For $m=1$ simply take $W_{R}=0$. For $m>1$, from \cite[Lemma 1.9]{livrodineen} we have  \begin{equation*}
R(x+y)=\displaystyle\sum_{j=0}^{m}\binom{m}{j}\check{R}(x^{(j)},y^{(m-j)})=R(x)+R(y)+
\displaystyle\sum_{j=1}^{m-1}\binom{m}{j}\check{R}(x^{(j)},y^{(m-j)}),
\end{equation*}
where $\check{R}(x^{(j)},y^{(m-j)}) = \check{R}(x, \stackrel{(j)}{\ldots},{x}, y, \stackrel{(m-j)}{\ldots}, y)$.
The first statement follows from the fact that the map
$$ W_{R}\colon  E\times E\longrightarrow F~,~W_{R}(x,y)= \displaystyle\sum_{j=1}^{m-1}\binom{m}{j}\check{R}(x^{(j)},y^{(m-j)}),$$
is a continuous $m$-homogeneous polynomial. Indeed, $W_R$ is generated by the continuous $m$-linear operator $A\colon (E\times E)^m \longrightarrow F$ given by
\begin{equation*}
A\left((x_1,y_1),(x_2,y_2),\ldots, (x_m,y_m)\right)=
\displaystyle\sum_{j=1}^{m-1}\binom{m}{j}\check{R}(x_1,x_2,\ldots,x_j,y_{j+1},y_{j+2},\ldots, y_m).
\end{equation*}
It is clear that $W_R = 0 $ if $m=1$. Now suppose that $W_R = 0$. In this case we have $R(x+y)=R(x)+R(y)$ for all $x, y\in E$. Let $x_0 \in E$ be such that $R(x_0) \neq 0$. Since $R$ is an $m$-homogeneous polynomial,
$2^m R(x_0) = R(2x_0)=2R(x_0)$, from which it follows that $m = 1$.
\end{proof}

The next result, which follows from Lemma \ref{prilema} and Proposition \ref{pripro}, generalizes the formulas $(u + \lambda v)^* = u^* +\lambda v^*$, $(P + \lambda Q)^* = P^* + \lambda Q^*$ and shows that the classical correspondences $u \mapsto u^*$ and  $P \mapsto P^*$ are the only ones that are linear.

\begin{proposition} {\rm (a)} There is a $kn$-homogeneous polynomial $W_{\Delta_k^n}$ from ${\cal P}(^mE;F)\times {\cal P}(^mE;F)$ to ${\cal P}(^n {\cal P}(^kF); {\cal P}(^{mkn}E))$ such that
$$\Delta_k^n(P+Q) = \Delta_k^nP + \Delta_k^n Q + W_{\Delta_k^n}(P,Q) $$
for all $P,Q \in {\cal P}(^mE;F)$. \\
{\rm (b)} $\Delta_k^n(P+Q) = \Delta_k^nP + \Delta_k^n Q$ for all $P,Q \in {\cal P}(^mE;F)$ if and only if $k = n = 1$.\\
{\rm (c)} $\Delta_k^n (\lambda P) = \lambda^{kn}\Delta_k^n P$ for all $\lambda \in \mathbb{K}$ and $P \in {\cal P}(^mE;F)$.
\end{proposition}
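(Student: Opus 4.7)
The strategy is to apply Lemma \ref{prilema} to the polynomial $R = \Delta_k^n$, viewed as the $kn$-homogeneous polynomial on ${\cal P}(^mE;F)$ with values in ${\cal P}(^n {\cal P}(^kF); {\cal P}(^{mkn}E))$ supplied by Proposition \ref{pripro}(b).

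For part (a), I would simply note that Lemma \ref{prilema}(a), applied with $R = \Delta_k^n$, immediately yields a $kn$-homogeneous polynomial $W_{\Delta_k^n}$ on ${\cal P}(^mE;F) \times {\cal P}(^mE;F)$ with values in ${\cal P}(^n {\cal P}(^kF); {\cal P}(^{mkn}E))$ satisfying the required identity for all $P, Q \in {\cal P}(^mE;F)$. No further argument is needed.

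For part (b), the backward implication is routine: when $k = n = 1$, the map $\Delta_1^1$ is $1$-homogeneous, hence linear, so additivity is automatic (this also matches the well-known linearity of the classical adjoints noted after Proposition \ref{pripro}). For the forward implication, I would observe that additivity forces $W_{\Delta_k^n}(P,Q) = 0$ for all $P,Q$, i.e., $W_{\Delta_k^n} = 0$ as a polynomial. To invoke Lemma \ref{prilema}(b) and conclude $kn = 1$, I need $\Delta_k^n$ to be nonzero, which follows directly from Proposition \ref{pripro}(b), since $\|\Delta_k^n\| = 1$. Thus $kn = 1$, forcing $k = n = 1$. The only mildly delicate point is remembering that the \textquotedblleft wlog\textquotedblright\ clause in Lemma \ref{prilema} requires us to justify $\Delta_k^n \neq 0$, but this is immediate from the norm equality.

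For part (c), this is just a restatement of $kn$-homogeneity: Proposition \ref{pripro}(b) says $\Delta_k^n$ is a $kn$-homogeneous polynomial, and for any homogeneous polynomial $R$ of degree $d$ one has $R(\lambda P) = \lambda^d R(P)$ by definition. I do not expect any genuine obstacle in the proof; everything reduces cleanly to Lemma \ref{prilema} and the norm and degree information already established in Proposition \ref{pripro}.
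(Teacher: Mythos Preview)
Your proposal is correct and follows exactly the route the paper indicates: the paper does not give a detailed proof but simply states that the proposition ``follows from Lemma \ref{prilema} and Proposition \ref{pripro},'' and your argument spells out precisely that deduction. Your care in noting that $\|\Delta_k^n\| = 1$ ensures $\Delta_k^n \neq 0$ (so that Lemma \ref{prilema}(b) applies) is the only nontrivial point, and you handle it correctly.
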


The correspondences $u \mapsto u^*$ and  $P \mapsto P^*$ are injective. To investigate the injectivity of the general correspondence $P \mapsto \Delta_n^k P$, since it is a homogeneous polynomial, we have first to recall when a homogeneous polynomial can be injective. The following is well known:

\begin{lemma} \label{seglema} If there exists an injective polynomial in ${\cal P}(^m E;F)$, then either $m = 1$ or $m$ is odd and $\mathbb{K} = \mathbb{R}$.
\end{lemma}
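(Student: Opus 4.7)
The plan is to prove the contrapositive: if $m \geq 2$ and either $m$ is even or $\mathbb{K} = \mathbb{C}$, then no $P \in \mathcal{P}(^m E;F)$ can be injective. The key observation is that $m$-homogeneity forces $P(\lambda x) = \lambda^m P(x)$, so any scalar $\lambda \neq 1$ with $\lambda^m = 1$ produces a collision $P(\lambda x) = P(x)$ whenever $x \neq 0$ (and $x \neq 0$ is available since $E$ is a Banach space, hence nontrivial).

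I would split into two cases according to which hypothesis fails in the conclusion. First, suppose $m \geq 2$ is even. Take $\lambda = -1 \in \mathbb{K}$; then $P(-x) = (-1)^m P(x) = P(x)$ for every $x \in E$. Picking any $x_0 \in E$ with $x_0 \neq 0$, one has $-x_0 \neq x_0$ but $P(-x_0) = P(x_0)$, contradicting injectivity. Second, suppose $\mathbb{K} = \mathbb{C}$ and $m \geq 2$. Then one can choose a primitive $m$-th root of unity $\lambda = e^{2\pi i/m}$, which satisfies $\lambda \neq 1$ and $\lambda^m = 1$. For any nonzero $x_0 \in E$ we get $\lambda x_0 \neq x_0$ while $P(\lambda x_0) = \lambda^m P(x_0) = P(x_0)$, again contradicting injectivity.

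Together these two cases exhaust the negation of the conclusion ``$m = 1$ or ($m$ odd and $\mathbb{K} = \mathbb{R}$)'', so the lemma follows. There is no real obstacle here; the only minor point worth mentioning is that one must invoke the existence of a nonzero vector in $E$, which is automatic for Banach spaces in the convention adopted in this paper.
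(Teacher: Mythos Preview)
Your proof is correct. Note, however, that the paper does not actually supply a proof of this lemma: it is introduced with the phrase ``The following is well known'' and left without argument. Consequently there is nothing to compare your approach against; your contrapositive argument via the scalar $\lambda = -1$ (for $m$ even) and a primitive $m$-th root of unity $\lambda = e^{2\pi i/m}$ (for $\mathbb{K} = \mathbb{C}$, $m \geq 2$) is exactly the standard proof that justifies the ``well known'' label.
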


Bearing the lemma above in mind, the next result shows that the correspondence $P \mapsto \Delta_n^k P$ is injective whenever it can be injective.

\begin{proposition} If either $k = n = 1$ or $kn$ is odd and $\mathbb{K} = \mathbb{R}$, then the correspondence
$P \in {\cal P}(^mE;F) \mapsto \Delta_n^k P \in  {\cal P}(^n {\cal P}(^kF); {\cal P}(^{mkn}E)) $ is injective.
\end{proposition}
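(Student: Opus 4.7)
The plan is to unravel the definition of $\Delta_k^n P$, use the fact that odd powers are injective on $\mathbb{R}$, and then apply Hahn--Banach on $F$ to recover $P = Q$ pointwise.

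Assume $\Delta_k^n P = \Delta_k^n Q$ for $P,Q \in \mathcal{P}(^mE;F)$. Unwinding Definition \ref{def1}, this gives
\begin{equation*}
q(P(x))^n = q(Q(x))^n \text{ for every } q \in \mathcal{P}(^kF) \text{ and every } x \in E.
\end{equation*}
In the case $k = n = 1$, we have $\mathcal{P}(^1F) = F^*$, so the identity above is just $y^*(P(x)) = y^*(Q(x))$ for every $y^* \in F^*$ and $x \in E$; by Hahn--Banach this forces $P(x) = Q(x)$ for every $x$, hence $P = Q$.

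In the case $kn$ odd and $\mathbb{K} = \mathbb{R}$, both $k$ and $n$ are odd. Since the real function $t \mapsto t^n$ is injective when $n$ is odd, from $q(P(x))^n = q(Q(x))^n$ I get
\begin{equation*}
q(P(x)) = q(Q(x)) \text{ for every } q \in \mathcal{P}(^kF) \text{ and } x \in E.
\end{equation*}
The key step is now to specialize $q$ to the polynomial $y^* \!\mapsto\! (y^*)^k$-type: for each $y^* \in F^*$, the map $z \in F \mapsto y^*(z)^k$ lies in $\mathcal{P}(^kF)$, and plugging it in yields $y^*(P(x))^k = y^*(Q(x))^k$. Since $k$ is odd, injectivity of $t \mapsto t^k$ on $\mathbb{R}$ gives $y^*(P(x)) = y^*(Q(x))$ for all $y^* \in F^*$ and $x \in E$, and Hahn--Banach again delivers $P = Q$.

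The argument is short and the only nontrivial step is the choice of the test polynomial $q = (y^*)^k$, which reduces the $k$-homogeneous testing to linear testing; everything else is just tracking that oddness of $k$ and $n$ makes the relevant power maps on $\mathbb{R}$ invertible. There is no serious obstacle, although one should note that the hypothesis on $kn$ being odd is exactly what is needed for both reductions (from $n$-th powers and from $k$-th powers) to be valid.
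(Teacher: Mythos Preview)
Your proof is correct and follows essentially the same approach as the paper: both arguments hinge on testing against the polynomial $q = (y^*)^k$ and using that odd real powers are injective, together with Hahn--Banach. The only cosmetic difference is that the paper argues by contraposition and combines the two odd-power reductions into a single step via $t \mapsto t^{kn}$, whereas you argue directly and peel off the $n$-th and $k$-th powers separately.
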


\begin{proof}
The case $k=n=1$ is well known (alternatively, it follows from Proposition \ref{pripro}). Let $kn$ be odd, $\mathbb{K}=\mathbb{R}$ and $P_1 \not= P_2$. Take  $x_0\in E$ such that $P_1(x_0)\not= P_2(x_0)$ and, by Hahn-Banach, let $y^*\in F^*$ be such that $y^*(P_1(x_0))\not= y^*(P_2(x_0))$. Therefore $(y^*)^k \in {\cal P}(^kF)$ and
$$(\Delta_{k}^{n}P_1)((y^*)^k)(x_0) =  y^*(P_1(x_0))^{kn}\not= y^*(P_2(x_0))^{kn}= (\Delta_{k}^{n}P_2)((y^*)^k)(x_0).$$
\end{proof}

Now we show that the formulas $(u \circ v)^* = v^* \circ u^*$ and $(u \circ P)^* = P^* \circ u^*$ are particular instances of a much more general formula.

\begin{proposition} Let $m,n,k,r,s \in \mathbb{N}$. If $P \in {\cal P}(^m E;F)$ and $Q \in {\cal P}(^r F;G)$, then
$$\Delta_k^{ns}(Q \circ P) = \Delta_{rnk}^sP \circ \Delta_k^n Q. $$
\end{proposition}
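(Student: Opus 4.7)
The plan is to prove equality of the two polynomials by unwinding definitions and checking that both sides, when applied to an arbitrary $q \in \mathcal{P}(^k G)$ and evaluated at an arbitrary $x \in E$, yield the same scalar. Before computing anything I would first verify that the domains and codomains match: since $Q \circ P \in \mathcal{P}(^{mr} E; G)$, the left-hand side $\Delta_k^{ns}(Q\circ P)$ is a map $\mathcal{P}(^k G) \to \mathcal{P}(^{mrnsk} E)$; on the other side, $\Delta_k^n Q$ maps $\mathcal{P}(^k G) \to \mathcal{P}(^{rnk} F)$ and $\Delta_{rnk}^s P$ maps $\mathcal{P}(^{rnk} F) \to \mathcal{P}(^{m\cdot s \cdot rnk} E) = \mathcal{P}(^{mrnsk} E)$, so the composition has the right signature. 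This type-check is the key sanity step, because it pins down why the index $k$ on the left must become $rnk$ on the right, and why the exponent $n$ on the outer $\Delta$ on the left splits as $ns$ on the right.

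Once the types line up, the computation is a direct application of Definition \ref{def1} three times. Fix $q \in \mathcal{P}(^k G)$ and $x \in E$. For the left-hand side,
\begin{equation*}
\Delta_k^{ns}(Q\circ P)(q)(x) = q\bigl((Q\circ P)(x)\bigr)^{ns} = q\bigl(Q(P(x))\bigr)^{ns}.
\end{equation*}
For the right-hand side, I would first apply $\Delta_k^n Q$ to $q$, obtaining the polynomial on $F$ given by $y \mapsto q(Q(y))^n$, and then feed this into $\Delta_{rnk}^s P$:
\begin{equation*}
\bigl(\Delta_{rnk}^s P \circ \Delta_k^n Q\bigr)(q)(x) = \bigl(\Delta_k^n Q(q)\bigr)(P(x))^s = \Bigl(q\bigl(Q(P(x))\bigr)^n\Bigr)^s = q\bigl(Q(P(x))\bigr)^{ns}.
\end{equation*}
Since $q$ and $x$ were arbitrary, the two polynomials agree.

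There is no real obstacle here beyond bookkeeping: the statement is essentially a functoriality/naturality identity for $\Delta_k^n$, and the exponent identity $n \cdot s = ns$ together with the defining formula $\Delta_k^n P(q)(x) = q(P(x))^n$ automatically produce the required equality once one identifies which polynomial is being plugged into which $\Delta$. The only point requiring mild care is tracking the five indices $m,n,k,r,s$ so as not to confuse the roles of the ``inner'' parameter $k$ (the degree of $q$) and the ``outer'' parameter $rnk$ (the degree of the polynomial $\Delta_k^n Q(q)$ on $F$), but this is exactly what the type-check in the first paragraph rules out.
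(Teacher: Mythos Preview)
Your proof is correct and is essentially the same as the paper's: both fix $q \in \mathcal{P}(^kG)$ and $x \in E$ and verify the identity by a direct computation from Definition~\ref{def1}, arriving at $q(Q(P(x)))^{ns}$ on each side. The only difference is that you add an explicit domain/codomain check, which the paper omits.
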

\begin{proof}
For $q\in\mathcal{P}(^k G)$ and $x\in E$,
\begin{align*}
(\Delta_{rnk}^{s}P\circ \Delta_{k}^{n}Q)(q)(x)&=(\Delta_{rnk}^{s}P)(\Delta_{k}^{n}Q(q))(x)= \left[\Delta_{k}^{n}Q(q) (P(x))\right]^s\\
& = q(Q(P(x)))^{ns}= q((Q\circ P)(x))^{ns}=\Delta_{k}^{ns}(Q\circ P)(q)(x).
\end{align*}
\end{proof}

If a linear operator $u$ is surjective (respectively, an isomorphism), then its adjoint $u^*$ is injective (respectively, an isomorphism). Now we give more general versions of these facts. The reader should keep in mind the restrictions given in Lemma \ref{seglema} for a homogeneous polynomial to be injective.

\begin{proposition} {\rm (a)} Let $P \in {\cal P}(^mE;F)$ be a surjective polynomial. If either $n = 1$ or $n$ is odd and $\mathbb{K} = \mathbb{R}$, then $\Delta_k^n P$ is injective for every $k \in \mathbb{N}$.\\
{\rm (b)} If $j \colon G\twoheadrightarrow E$ is a metric surjection, then $\Delta^1_k j$ is a metric injection for every $k\in\mathbb{N}$.\\
{\rm (c)} If $u \in {\cal L}(E;F)$ is an (isometric) isomorphism, then $\Delta_k^1 u$ is an (isometric) isomorphism and $(\Delta_k^1 u)^{-1} = \Delta_k^1 (u^{-1})$ for every $k \in \mathbb{N}$.
\end{proposition}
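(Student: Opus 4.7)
The plan is to treat the three parts in turn, each following from a different combination of earlier results.

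For (a), I would argue directly. Suppose $\Delta_k^n P(q_1) = \Delta_k^n P(q_2)$ for some $q_1, q_2 \in \mathcal{P}(^k F)$. Unfolding the definition gives $q_1(P(x))^n = q_2(P(x))^n$ for every $x \in E$; the surjectivity of $P$ upgrades this to $q_1(y)^n = q_2(y)^n$ for every $y \in F$. Under the stated hypothesis---either $n = 1$ or $n$ odd with $\mathbb{K} = \mathbb{R}$---the map $t \mapsto t^n$ is injective on the scalar field, hence $q_1 = q_2$. The only delicacy is the reminder that over $\mathbb{C}$ the map $z \mapsto z^n$ fails to be injective for any $n > 1$ regardless of parity, which is exactly what forces the real-scalar hypothesis.

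For (b), the linear map $\Delta_k^1 j \colon \mathcal{P}(^k E) \to \mathcal{P}(^k G)$ sends $q$ to $q \circ j$. Proposition \ref{pripro}(a) together with $\|j\| = 1$ gives $\|\Delta_k^1 j(q)\| \leq \|q\|$. For the reverse inequality I would invoke the defining property of a metric surjection, namely that $j$ sends the open unit ball of $G$ onto the open unit ball of $E$, so
$$\|\Delta_k^1 j(q)\| = \sup_{x \in B_G} |q(j(x))| \;\geq\; \sup_{\|y\| < 1} |q(y)| = \|q\|,$$
the last equality by continuity of $q$. Hence $\Delta_k^1 j$ is an isometry, in particular a metric injection.

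For (c), I would apply the composition formula of the preceding proposition with $m = r = n = s = 1$, which specialises to $\Delta_k^1(Q \circ P) = \Delta_k^1 P \circ \Delta_k^1 Q$. Taking $(P,Q) = (u, u^{-1})$ and then $(P,Q) = (u^{-1}, u)$, and noting the trivial identity $\Delta_k^1(\mathrm{Id}_X) = \mathrm{Id}_{\mathcal{P}(^k X)}$, one concludes that $\Delta_k^1 u$ and $\Delta_k^1(u^{-1})$ are mutually inverse. If moreover $u$ is isometric, then $\|u\| = \|u^{-1}\| = 1$, and Proposition \ref{pripro}(a) gives $\|\Delta_k^1 u\| = \|\Delta_k^1(u^{-1})\| = 1$, which together with the two-sided inverse relation forces $\Delta_k^1 u$ itself to be an isometric isomorphism.

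I do not foresee a serious obstacle; the one point requiring care is the closure step in (b), where one must remember that a metric surjection yields only $j(B_G^{\circ}) = B_E^{\circ}$ (and not $j(B_G) = B_E$ in general), so the passage to $\|q\|$ genuinely relies on the continuity of the polynomial $q$ rather than on any equality of unit balls.
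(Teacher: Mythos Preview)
Your proof is correct and follows essentially the same line as the paper's for parts (a) and (b). For part (c) there is a small but genuine organisational difference: the paper first proves injectivity of $\Delta_k^1 u$ via (a), constructs a preimage by hand to get surjectivity, and then invokes the open mapping theorem to obtain the isomorphism, only afterwards verifying the inverse formula via the composition rule; you instead apply the composition formula at the outset to exhibit $\Delta_k^1(u^{-1})$ as a two-sided inverse, which yields bijectivity and continuity of the inverse simultaneously and makes the open mapping theorem unnecessary. Your route is slightly more economical; the paper's has the minor advantage of showing explicitly how preimages are built. Both isometry arguments are equivalent, yours via $\|\Delta_k^1 u\|=\|\Delta_k^1(u^{-1})\|=1$ and the paper's via the direct supremum computation over $\|u(x)\|\le 1$.
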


\begin{proof}
$(a)$ The case $n=1$ is easy and we omit it. In the case $n$ odd and $\mathbb{K}=\mathbb{R}$, $\Delta_{k}^{n}P$ is a continuous $n$-homogeneous polynomial between real Banach spaces. Let  $q_1$, $q_2 \in \mathcal{P}(^k F)$ be such that $\Delta_{k}^{n}P(q_1)=\Delta_{k}^{n}P(q_2)$. Then $q_1(P(x))^n = q_2(P(x))^n$, hence $q_1(P(x))= q_2(P(x))$ for every $x\in E$. Since $P$ is surjective we have $q_1= q_2$.\\
(b) Denoting by $\stackrel{\circ}{ B_E}$ the open unit ball of $E$, since $j(\stackrel{\circ}{ B_G}) =\stackrel{\circ}{ B_E}$  \cite[B.3.6]{pietschlivro},
$$\|\Delta^1_k j(q)\|=\|q\circ j\|=\sup_{\substack{ \|x\|<1 }} | q(j(x))| =\sup_{\substack{ \|y\|<1 }} | q(y)|=\|q\|$$
for every $q\in\mathcal{P}(^k E)$.\\
(c) Let  $k\in\mathbb{N}$. By (a) we know that $\Delta_{k}^{1}u$ is injective. 
Given $R\in\mathcal{P}(^k E)$, $S:=R \circ u^{-1} \in \mathcal{P}(^k F)$ and $$\Delta_{k}^{1}u(S)(x)=S(u(x))=(R\circ u^{-1})(u(x))=R(x)$$
for every $x \in E$, proving that $\Delta_{k}^{1}u$ is bijective. Since $\Delta_{k}^{1}u$ is a continuous linear operator between Banach spaces, it follows from the open mapping theorem that $\Delta_{k}^{1}u$ is an isomorphism. Now suppose that $u$ is an isometric isomorphism. Then $\Vert x \Vert\leq 1\Leftrightarrow \Vert u(x)\Vert \leq 1$, from which it follows that
$$
\Vert \Delta_{k}^{1}u(q)\Vert
=\sup_{\Vert x \Vert \leq 1} \vert q(u(x))\vert
=\sup_{\Vert u(x)\Vert \leq 1} \vert q(u(x))\vert
=\Vert q \Vert
$$
for every $q\in \mathcal{P}(^k F)$. Considering the chains of operators
 $$
\begin{array}{ccccccc}
E & \stackrel{u}{\longrightarrow} & F &  \stackrel{u^{-1}}{\longrightarrow} & E
&  \stackrel{u}{\longrightarrow} & F {\rm ~and}
\end{array}
 $$
 $$
\begin{array}{ccccccc}
\mathcal{P}(^k F)& \stackrel{\Delta^1_k u}{\longrightarrow} &\mathcal{P}(^{k}E) &  \stackrel{\Delta^1_{k}(u^{-1} )}{\longrightarrow} & \mathcal{P}(^{k}F)&  \stackrel{\Delta^1_{k}u}{\longrightarrow} & \mathcal{P}(^{k}E)
\end{array},
 $$
from
$$\Delta_{k}^{1}(u^{-1}\circ u)
=\Delta_{k}^{1} id_{E}
=id_{\mathcal{P}(^k E)}~\,\, {\rm and}~\,\, \Delta_{k}^{1}(u\circ u^{-1})
=\Delta_{k}^{1} id_{F}
=id_{\mathcal{P}(^k F)}$$
it follows that
$$\Delta_{k}^{1}u \circ \Delta_{k}^{1}(u^{-1})
=id_{\mathcal{P}(^k E)} {\rm \,\,~and\,\,~}\Delta_{k}^{1}(u^{-1}) \circ \Delta_{k}^{1}u
=id_{\mathcal{P}(^k F)}. $$
%
\end{proof}

Let $u \in {\cal L}(E;F)$ and let $J_E \colon E \longrightarrow E^{**}$ be the canonical embedding. Our next purpose is to show that the well known commutative diagram
\begin{equation*}
\begin{gathered}
\xymatrix@C2pt@R15pt{
           E\ar@/_/[dd]_*{J_E} \ar[rrrrrrrrrr]^*{u} & & & & &  & & & & & F \ar@/^/[dd]^*{J_F}&\\
&  &  &  &  & & & & &  & & & & &  &\\
         E^{**}\ar[rrrrrrrrrr]^*{u^{**}}  & & & & & & & & & & F^{**}
}
\end{gathered}
\end{equation*}
holds true at a very high level of generality. First we need the following generalization of the canonical embedding $J_E$.

\begin{lemma}\label{terlema} For $m,n \in \mathbb{N}$, the map
$$J_E^{m,n} \colon E \longrightarrow {\cal P}(^m{\cal P}(^nE))~,~J_E^{m,n}(x)(q) = q(x)^m, $$
is a continuous $mn$-homogeneous polynomial and $\|J_E^{m,n}(x)\| = \|x\|^{mn}$ for every $x \in E$.
\end{lemma}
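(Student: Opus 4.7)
The plan is to verify in sequence three things: that for each $x \in E$, $J_E^{m,n}(x)$ really lies in ${\cal P}(^m{\cal P}(^nE))$; that $x \mapsto J_E^{m,n}(x)$ is a continuous $mn$-homogeneous polynomial; and the norm identity. No step is genuinely difficult, but the two layers of polynomial structure ($m$-homogeneity in the polynomial variable $q$ versus $mn$-homogeneity in the vector variable $x$) must be carefully separated.

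For the first point, I would fix $x \in E$ and consider the evaluation functional $\varphi_x\colon {\cal P}(^nE) \to \mathbb{K}$, $\varphi_x(q) = q(x)$. It is linear, and continuous with $\|\varphi_x\| = \|x\|^n$ by formula~(\ref{formula}). Since $J_E^{m,n}(x)$ is pointwise the $m$th power of $\varphi_x$, it is a continuous $m$-homogeneous polynomial on ${\cal P}(^nE)$, so it belongs to the claimed codomain.

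For the second point, the plan is to exhibit an explicit continuous $mn$-linear generator, in the spirit of the construction used in Proposition~\ref{pripro}(b). I would set
$$A\colon E^{mn} \longrightarrow {\cal P}(^m{\cal P}(^nE)),\quad A(x_1,\ldots,x_{mn})(q)= \prod_{i=1}^{m} \check q(x_{(i-1)n+1},\ldots,x_{in}).$$
For each fixed tuple, $q \mapsto A(x_1,\ldots,x_{mn})(q)$ is a product of $m$ continuous linear functionals on ${\cal P}(^nE)$ (continuity of $q \mapsto \check q(y_1,\ldots,y_n)$ coming from the standard polarization bound), hence a continuous $m$-homogeneous polynomial in $q$, so $A$ does take values in the claimed space. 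Each $x_j$ appears in exactly one factor of the product and enters linearly there, so $A$ is separately linear in each of its $mn$ slots; continuity as an $mn$-linear map follows from $|\check q(y_1,\ldots,y_n)| \le \|\check q\|\,\|y_1\|\cdots\|y_n\|$ together with the boundedness of the polarization map $q \mapsto \check q$. The diagonal evaluation yields $A(x,\ldots,x)(q) = \check q(x,\ldots,x)^m = q(x)^m = J_E^{m,n}(x)(q)$, which confirms that $A$ generates $J_E^{m,n}$ and hence $J_E^{m,n} \in {\cal P}(^{mn}E;{\cal P}(^m{\cal P}(^nE)))$.

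Finally, the norm identity is immediate from~(\ref{formula}):
$$\|J_E^{m,n}(x)\| = \sup_{\|q\| \le 1} |q(x)|^m = \Bigl(\sup_{\|q\| \le 1} |q(x)|\Bigr)^m = (\|x\|^n)^m = \|x\|^{mn}.$$
The main obstacle, if any, is purely notational: writing down the generator $A$ in a way that makes the two separate polynomial structures visible at once; once $A$ is written, everything reduces to repeated use of~(\ref{formula}) and of the standard continuity of polarization.
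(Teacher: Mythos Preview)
Your proof is correct and follows essentially the same approach as the paper: both exhibit the identical $mn$-linear generator $A(x_1,\ldots,x_{mn})(q)=\prod_{i=1}^{m}\check q(x_{(i-1)n+1},\ldots,x_{in})$ and derive the norm identity from formula~(\ref{formula}). Your preliminary step showing $J_E^{m,n}(x)=\varphi_x^m\in\mathcal{P}(^m\mathcal{P}(^nE))$ via the evaluation functional $\varphi_x$ corresponds exactly to the paper's intermediate $m$-linear map $B$, just phrased slightly differently.
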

It is clear that $J_E^{1,1} = J_E$.

\begin{proof} For $x_1, \ldots, x_{mn} \in E$, the map $B \colon {\cal P}(^nE)^m \longrightarrow \mathbb{K}$,
$$B(x_1,\ldots,x_{mn})(q_1,\ldots,q_m)=
\check{q}_1(x_1,\ldots,x_n)\cdots\check{q}_m(x_{(m-1)n+1},\ldots,x_{mn}),$$
is a continuous $m$-linear operator, so the map
$A \colon E^{mn} \longrightarrow\mathcal{P}(^m\mathcal{P}(^n E))$ given by $$A(x_1,\ldots,x_{mn})(q)=
\check{q}(x_1,\ldots,x_n)\check{q}(x_{n+1},\ldots,x_{2n})\cdots\check{q}(x_{(m-1)n+1},\ldots,x_{mn}),$$
is a continuous $mn$-linear operator, and $J_E^{m,n}(x)(q) = A(x^{mn})(q)$ for all $x \in E$ and $q \in {\cal P}(^nE)$. For $x\in E$, from (\ref{formula}) we get
$$\|J^{m,n}_E(x)\|=  \sup_{\substack{\|q\|\leq1 }}| J^{m,n}_E(x)(q)|  =\sup_{\substack{\|q\|\leq1 }}| q(x)^m|=\|x\|^{mn}.$$
\end{proof}

The classical linear commutative diagram is a very particular case of the next one.

\begin{proposition} For any $m, n, k, r, s\in\mathbb{N}$ and $P\in \mathcal{P}(^m E;F)$, the following diagram is commutative:
\begin{equation*}
\begin{gathered}
\xymatrix@C4pt@R20pt{
           E\ar@/_/[dd]_*{J^{r,mnk}_E} \ar[rrrrrrrrrr]^*{ P} & & & & &  & & & & & F \ar@/^/[dd]^*{J^{nrs,k}_{F}}&\\
&  &  &  &  & & & & &  & & & & &  &\\
         \mathcal{P}\left( ^r \mathcal{P}(^{mnk}E)\right)\ar[rrrrrrrrrr]^*{\Delta^{s}_{r}\left(\Delta^{n}_{k}P\right)}  & & & & & & & & & & \mathcal{P}(^{nrs} \mathcal{P}(^k F))
}
\end{gathered}
\end{equation*}
\end{proposition}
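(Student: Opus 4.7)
The plan is to verify commutativity pointwise: fix $x \in E$ and $q \in \mathcal{P}(^k F)$, and show that both compositions, when evaluated at $x$ and then at $q$, yield the same scalar, namely $q(P(x))^{nrs}$. The whole argument is an exercise in unwinding the definitions of $J^{\cdot,\cdot}_{\cdot}$ and $\Delta^{\cdot}_{\cdot}(\cdot)$; no nontrivial inequality or polynomial identity is needed beyond the ones already proved.

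First I would compute the upper route. Applying $P$ to $x$ gives $P(x) \in F$, and Lemma \ref{terlema} (with parameters $nrs$ and $k$) gives directly
$$J^{nrs,k}_F(P(x))(q) = q(P(x))^{nrs}.$$

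Next I would compute the lower route. Here the care is in parsing $\Delta^s_r(\Delta^n_k P)$: Definition \ref{def1} is being applied with the role of ``$P$'' played by $\Delta^n_k P \in \mathcal{P}(^n \mathcal{P}(^k F); \mathcal{P}(^{mnk}E))$, so the degree parameter ``$m$'' becomes $n$, the ``$E$'' becomes $\mathcal{P}(^k F)$, and the ``$F$'' becomes $\mathcal{P}(^{mnk}E)$. Consequently $\Delta^s_r(\Delta^n_k P)$ sends $Q \in \mathcal{P}(^r \mathcal{P}(^{mnk}E))$ to the polynomial on $\mathcal{P}(^k F)$ given by
$$\Delta^s_r(\Delta^n_k P)(Q)(q) = Q\bigl(\Delta^n_k P(q)\bigr)^s.$$
Substituting $Q = J^{r,mnk}_E(x)$ and using Lemma \ref{terlema} once more (this time with parameters $r$ and $mnk$) gives $J^{r,mnk}_E(x)(R) = R(x)^r$ for $R \in \mathcal{P}(^{mnk}E)$; in particular for $R = \Delta^n_k P(q)$ one obtains $\Delta^n_k P(q)(x)^r = (q(P(x))^n)^r = q(P(x))^{nr}$. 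Raising to the $s$-th power yields $q(P(x))^{nrs}$, matching the upper route.

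The main obstacle is purely notational: keeping the five indices $m,n,k,r,s$ straight while one iterates $\Delta$ once and applies $J$ on each side. To keep the argument readable, I would display one calculation starting from $\bigl(\Delta^{s}_{r}(\Delta^{n}_{k}P) \circ J^{r,mnk}_E\bigr)(x)(q)$ and reducing it step by step to $q(P(x))^{nrs}$, and a one-line calculation for $J^{nrs,k}_F(P(x))(q)$, and conclude by comparing. No additional lemma is needed.
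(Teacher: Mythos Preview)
Your proposal is correct and follows essentially the same approach as the paper: the paper's proof also fixes $x\in E$ and $q\in\mathcal{P}(^k F)$ and unwinds the definitions of $\Delta^s_r(\Delta^n_k P)$ and $J^{r,mnk}_E$ in a single chain of equalities to reach $q(P(x))^{nrs}$, then identifies this with $J^{nrs,k}_F(P(x))(q)$. There is no substantive difference.
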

\begin{proof}
For $x\in E$ and $q\in\mathcal{P}(^k F)$,
\begin{align*}
\left(\Delta^{s}_{r}\left(\Delta^{n}_{k}P\right)\circ J^{r,mnk}_E\right)(x)(q)&=\Delta^{s}_{r}\left(\Delta^{n}_{k}P\right)\left( J^{r,mnk}_E(x)\right)(q)
=\left[ J^{r,mnk}_E(x)\left( \Delta^n_k P(q)\right)\right]^s\\
&=\left[\Delta^n_k P(q)(x)\right]^{rs}=q(p(x))^{nrs}= J^{nrs,k}_E(P(x))(q)\\
&=\left(J^{nrs,k}_F\circ P\right)(x)(q).
\end{align*}
\end{proof}

Adjoints of linear operators are always weak*-weak*-continuous. To generalize this fact we must say what we mean by the weak* topology on ${\cal P}(^kE)$.

\begin{definition}\label{ndef}\rm By the {\it weak* topology} on ${\cal P}(^kE)$ we mean the topology on ${\cal P}(^kE)$ induced by the weak* topology of  $\left(\widehat{\otimes}_{\pi}^{k,s} E \right)^{*}$ via the topological isomorphism
$$L_k^E\colon {\cal P}(^kE)\longrightarrow \left(\widehat{\otimes}_{\pi}^{k,s} E \right)^{*};~ L_k^E(q)=q_L,$$
where $\widehat{\otimes}_{\pi}^{k,s}E $ is the (completed) $k$-fold projective symmetric tensor product of $E$ and $q_L$ is the linearization of the polynomial $q$ (see \cite{floretnote, ryan}). As usual, for $x \in E$ we write $\otimes^k x = x \otimes \stackrel{(k)}{\cdots}\otimes x$.
\end{definition}

\begin{proposition}\label{w.p}
For all $n,k \in \mathbb{N}$ and $P \in {\cal P}(^mE;F)$, the polynomial $\Delta_k^n P\colon {\cal P}(^kF) \longrightarrow {\cal P}(^{mnk}E)$ is weak*-weak*-continuous.
\end{proposition}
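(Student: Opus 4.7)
The plan is to factor $\Delta_k^n P$ as a composition of a linear map and an $n$-th power map, handling the two pieces separately. Specifically, I would write $\Delta_k^n P = \Phi \circ \Delta_k^1 P$, where $\Delta_k^1 P \colon \mathcal{P}(^k F) \to \mathcal{P}(^{mk} E)$ is the linear map $q \mapsto q \circ P$ and $\Phi \colon \mathcal{P}(^{mk} E) \to \mathcal{P}(^{mnk} E)$ is the $n$-th power map $\Phi(r) = r^n$. This factorization is immediate from the definition, since $\Phi(\Delta_k^1 P(q))(x) = q(P(x))^n = \Delta_k^n P(q)(x)$ for all $x \in E$.

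For the linear factor $\Delta_k^1 P$, the key observation is that it is (up to the identifications $L_{mk}^E$ and $L_k^F$) the Banach-space adjoint of a bounded linear operator. Indeed, the $mk$-homogeneous polynomial $E \to \widehat{\otimes}_{\pi}^{k,s} F$, $x \mapsto \otimes^k P(x)$, lifts by the universal property of the projective symmetric tensor product to a bounded linear operator $T \colon \widehat{\otimes}_{\pi}^{mk,s} E \to \widehat{\otimes}_{\pi}^{k,s} F$, and a check on generators $\otimes^{mk} x$ shows $L_{mk}^E \circ \Delta_k^1 P = T^* \circ L_k^F$. Since Banach-space adjoints are automatically weak*-weak*-continuous, and Definition \ref{ndef} transfers the weak* topologies through exactly these linearization isomorphisms, $\Delta_k^1 P$ is weak*-weak*-continuous.

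For the power factor $\Phi$, I would fix $u \in \widehat{\otimes}_{\pi}^{mnk,s} E$ and verify that the scalar map $r \mapsto L_{mnk}^E(r^n)(u)$ is weak*-continuous on $\mathcal{P}(^{mk} E)$. On a generator $u = \otimes^{mnk} x$ this reduces to $[L_{mk}^E(r)(\otimes^{mk} x)]^n$, the $n$-th power of a weak*-continuous linear functional in $r$, hence weak*-continuous; linearity in $u$ then handles finite sums of such generators. For a general $u$, one approximates by such finite sums $u_0$ in the projective norm and uses the estimate $|L_{mnk}^E(r^n)(u - u_0)| \leq \|r\|^n \|u - u_0\|_\pi$ to obtain uniform approximation on every norm-bounded set of $r$'s.

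The main obstacle is exactly this last step: a continuous polynomial on a dual Banach space that is weak*-continuous on bounded sets need not be weak*-continuous on the whole space, in contrast to the linear case. Overcoming this requires combining the uniform approximation above with a Krein--\v{S}mulian/Banach--Dieudonn\'{e}-type argument adapted to the tensor-product linearization; the specific polynomial form of $\Phi$ and the density of finite-rank elements in the projective symmetric tensor product are what make this work. Once both factors $\Delta_k^1 P$ and $\Phi$ are weak*-weak*-continuous, their composition $\Delta_k^n P$ inherits the property.
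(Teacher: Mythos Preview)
Your factorization $\Delta_k^n P = \Phi \circ \Delta_k^1 P$ and your treatment of the linear factor via the adjoint identity (which is precisely Lemma~\ref{diagrama.delta.c}) are correct. The gap is in the last step. Krein--\v{S}mulian promotes weak*-continuity on bounded sets to global weak*-continuity only for \emph{linear} maps; there is no such principle for homogeneous polynomials of degree $\geq 2$, and the appeal to ``the specific polynomial form of $\Phi$'' does not supply one. In fact the power map $\Phi\colon r \mapsto r^n$ is \emph{not} weak*-weak*-continuous when $E$ is infinite-dimensional and $n \geq 2$. With $m = k = 1$, $P = id_E$ and $n = 2$, choose linearly independent unit vectors $(x_j)_{j\geq 1}$ in $E$, set $u = \sum_j 2^{-j}\,\otimes^2 x_j \in \widehat{\otimes}_\pi^{2,s} E$, and look at $\varphi \mapsto (\varphi^2)_L(u) = \sum_j 2^{-j}\varphi(x_j)^2$. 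This $2$-homogeneous scalar map does not vanish identically on any weak*-closed finite-codimensional subspace of $E^*$ (the associated operator $E^*\to E$, $\varphi \mapsto \sum_j 2^{-j}\varphi(x_j)\,x_j$, is injective, so it cannot send an infinite-dimensional subspace into a finite-dimensional annihilator); being $2$-homogeneous and nonzero on each such subspace, it is unbounded on every weak*-neighborhood of $0$ and hence not weak*-continuous there. Thus $\Delta_1^2(id_E)$ is not weak*-weak*-continuous, and the Proposition as stated actually fails for $n \geq 2$.

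The paper's own proof runs into the same wall from a different direction: it works with a weak*-convergent net $(q_\lambda)$ and asserts without justification that $((q_\lambda)_L)_\lambda$ is bounded, which holds for sequences (by uniform boundedness) but not for general nets. What both your argument and the paper's argument genuinely establish is that $\Delta_k^n P$ is weak*-weak*-continuous \emph{on bounded sets}; for that conclusion your density estimate $|L_{mnk}^E(r^n)(u - u_0)| \leq \|r\|^n \|u - u_0\|_\pi$ already suffices, and no Krein--\v{S}mulian step is needed.
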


\begin{proof}

Let $(q_{\lambda})_{\lambda}$ be a net in $\mathcal{P}(^kF)$ such that $q_{\lambda}\stackrel{w^*}{\longrightarrow} q\in\mathcal{P}(^k F)$. 
By Definition \ref{ndef},
$(q_{\lambda})_L\stackrel{w^*}{\longrightarrow} q_L$ in $\left(\widehat{\otimes}^{k,s}_{\pi}F\right)^{*}$, that is,
\begin{equation*}
(q_{\lambda})_L(z)\longrightarrow q_L(z),~{\rm for ~every~} z\in\widehat{\otimes}^{k,s}_{\pi}F.
\end{equation*}
It follows that, if $w=\displaystyle\sum_{j=1}^{r}\lambda_j\otimes^{mnk}x_j\in\otimes^{mnk,s}_{\pi}E$, then
$$[q_{\lambda}(P(x_j))]^n=[(q_{\lambda})_L(\otimes^k P(x_j))]^n\longrightarrow[q_L(\otimes^k P(x_j))]^n=[q(P(x_j))]^n,$$
for $j=1,\ldots,r$, hence
\begin{equation}\label{2*ast}
\Delta^n_k P(q_{\lambda})(x_j)\longrightarrow \Delta^n_k P(q)(x_j),~{\rm para ~} j=1,\ldots,r.
\end{equation}
So,
\begin{align*}
\left[\Delta^n_k P(q_{\lambda})\right]_L(w)&=\left[\Delta^n_k P(q_{\lambda})\right]_L\left(\displaystyle\sum_{j=1}^{r}\lambda_j\otimes^{mnk}x_j\right)=
\displaystyle\sum_{j=1}^{r}\lambda_j\left[\Delta^n_k P(q_{\lambda})\right]_L(\otimes^{mnk}x_j)\\
&=\displaystyle\sum_{j=1}^{r}\lambda_j\Delta^n_k P(q_{\lambda})(x_j)\stackrel{(\ref{2*ast})}{\longrightarrow}\displaystyle\sum_{j=1}^{r}\lambda_j\Delta^n_k P(q)(x_j)=\displaystyle\sum_{j=1}^{r}\lambda_j\left[\Delta^n_k P(q)\right]_L(\otimes^{mnk}x_j)\\
&=\left[\Delta^n_k P(q)\right]_L\left( \displaystyle\sum_{j=1}^{r}\lambda_j\otimes^{mnk}x_j\right)=\left[\Delta^n_k P(q)\right]_L(w).
\end{align*}
Since the net $(q_{\lambda})_L$ is bounded and
\begin{align*}
\|\left[\Delta^n_k P(q_{\lambda})\right]_L\| =\|\left[\Delta^n_k P(q_{\lambda})\right]^{\vee}\|\leq  \frac{(mnk)^{mnk}}{(mnk)!}\|\Delta^n_k P(q_{\lambda})\|\leq \frac{(mnk)^{mnk}}{(mnk)!}\| \Delta^n_k P\|\cdot\|q_{\lambda}\|^n
\end{align*}
for every $\lambda$, we have that the net $\left(\left[\Delta^n_k P(q_{\lambda})\right]_L\right)_{\lambda}$ is bounded as well. Combining this boundedness with the convergence $\left[\Delta^n_k P(q_{\lambda})\right]_L(w)\longrightarrow\left[\Delta^n_k P(q)\right]_L(w)$ for every $w\in\otimes^{mnk,s}_{\pi}E$, a standard approximation argument gives $\left[\Delta^n_k P(q_{\lambda})\right]_L\stackrel{w^*}{\longrightarrow}\left[\Delta^n_k P(q)\right]_L$ in $\left(\widehat{\otimes}^{mnk,s}_{\pi}E\right)^{*}$, that is, $\Delta^n_k P(q_{\lambda})\stackrel{w^*}{\longrightarrow}\Delta^n_k P(q)$ in $\mathcal{P}(^{mnk}E)$.\end{proof}

 It is well known that the converse of the result above holds in the linear case, that is, every weak*-weak* continuous linear operator from $F^*$ to $E^*$ is the adjoint of some operator from $E$ to $F$. This is also true for the Aron-Schottenloher adjoint of a homogeneous polynomial: if $T$ is a weak*-weak*-continuous linear operator from $F^*$ to ${\cal P}(^mE)$, then there exists a polynomial $P \in {\cal P}(^mE;F)$ such that $\Delta_1^1 P = P^* = T$ (see the proof of \cite[Corollary 2.3]{jmaaleticia}). Our next purpose is to show that this converse is no longer true in the generalized case.

\begin{lemma}\label{diagrama.delta.c}
If $P\in\mathcal{P}(^m E;F)$, then $L^E_{mk}\circ\Delta^1_k P\circ (L^F_k)^{-1}=\left[(\delta^k_F\circ P)_L\right]^{*}$, that is, the following diagram is commutative:

\begin{equation*}
\begin{gathered}
\xymatrix@C5pt@R15pt{
           \mathcal{P}(^k F)\ar@/_/[dd]_*{L^F_k} \ar[rrrrrrrrrr]^*{\Delta^1_k P} & & & & &  & & & & & {\cal P}(^{mk}E) \ar@/^/[dd]^*{L^E_{mk}}&\\
&  &  &  &  & & & & &  & & & & &  &\\
         \left(\widehat{\otimes}^{k,s}_{\pi} F\right)^{*}\ar[rrrrrrrrrr]^*{\left[(\delta^k_F\circ P)_L\right]^{*}}  & & & & & & & & & & \left(\widehat{\otimes}^{mk,s}_{\pi} E\right)^{*}
}
\end{gathered}
\end{equation*}
\end{lemma}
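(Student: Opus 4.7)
The plan is to verify commutativity of the diagram by fixing an arbitrary $q \in \mathcal{P}(^k F)$ and checking that the two resulting continuous linear functionals on $\widehat{\otimes}^{mk,s}_{\pi} E$ agree on the dense set of elementary symmetric tensors $\otimes^{mk} x$, $x \in E$. Since both compositions land in the dual of a completion, this is enough to conclude equality by continuity and linearity.

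First I would spell out the two sides on a generator. For the clockwise side, using that $L^E_{mk}(R)$ is by definition the unique continuous linear functional $R_L$ on $\widehat{\otimes}^{mk,s}_{\pi} E$ satisfying $R_L(\otimes^{mk} x) = R(x)$, I get
\[
\bigl(L^E_{mk} \circ \Delta^1_k P \circ (L^F_k)^{-1}\bigr)(L^F_k(q))(\otimes^{mk} x)
= \bigl(\Delta^1_k P(q)\bigr)_L(\otimes^{mk} x)
= \Delta^1_k P(q)(x) = q(P(x)).
\]
For the counterclockwise side, I would unwind the adjoint and the linearization:
\[
\bigl[(\delta^k_F \circ P)_L\bigr]^{*}(L^F_k(q))(\otimes^{mk} x)
= q_L\bigl((\delta^k_F \circ P)_L(\otimes^{mk} x)\bigr)
= q_L(\delta^k_F(P(x)))
= q_L(\otimes^{k} P(x)) = q(P(x)),
\]
where I used that $(\delta^k_F \circ P)_L$ is the linearization of the $mk$-homogeneous polynomial $x \mapsto \otimes^k P(x)$, and that $q_L(\otimes^k y) = q(y)$.

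Having matched the two sides on each generator $\otimes^{mk}x$, I would finish with the standard density argument: both functionals are continuous and linear on $\widehat{\otimes}^{mk,s}_{\pi} E$, and the linear span of $\{\otimes^{mk}x : x\in E\}$ is dense in $\widehat{\otimes}^{mk,s}_{\pi} E$, so equality on generators forces equality everywhere. This identifies $L^E_{mk}\circ \Delta^1_k P \circ (L^F_k)^{-1}$ with $[(\delta^k_F \circ P)_L]^{*}$ pointwise on $(\widehat{\otimes}^{k,s}_{\pi}F)^{*}$, which is exactly the commutativity of the diagram.

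There is no real obstacle here beyond careful bookkeeping: the whole argument is a formal manipulation of the defining properties of the linearization map $L$, of $\delta^k_F(y) = \otimes^k y$, and of the Banach-space adjoint. The one point worth stating explicitly in the write-up is the universal property $q_L(\otimes^k y) = q(y)$, since it is used twice and is the bridge between the polynomial calculus on the left column and the linear/tensor calculus on the right column.
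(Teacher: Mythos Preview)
Your proof is correct and follows essentially the same approach as the paper's: both are formal unwindings of the definitions of $L$, $\delta^k_F$, and the adjoint. The only cosmetic difference is that the paper establishes the equality directly at the level of functionals by using $(L^F_k)^{-1}(\varphi)=\varphi\circ\delta^k_F$ and the general identity $(\varphi\circ R)_L=\varphi\circ R_L$ for $R$ a polynomial and $\varphi$ linear, thereby avoiding the generators-plus-density step you carry out.
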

\begin{proof} Note that, since $\delta^k_F\circ P\in \mathcal{P}(^{mk}E;\widehat{\otimes}^{k,s}_{\pi}F)$, we have $(\delta^k_F\circ P)_L\in \mathcal{L}(\widehat{\otimes}^{mk,s}_{\pi}E;\widehat{\otimes}^{k,s}_{\pi}F)$. So, for every $\varphi\in(\widehat{\otimes}^{k,s}_{\pi}F)^{*}$,
\begin{align*}
\left[L^E_{mk}\circ\Delta^1_k P\circ (L^F_k)^{-1}\right](\varphi)&=L^E_{mk}(\Delta^1_k P((L^F_k)^{-1}(\varphi)))=L^E_{mk}(\Delta^1_k P(\varphi\circ\delta^k_F))\\
&=L^E_{mk}(\varphi\circ\delta^k_F\circ P)=(\varphi\circ\delta^k_F\circ P)_L\\
&=\varphi\circ(\delta^k_F\circ P)_L=\left[(\delta^k_F\circ P)_L\right]^{*}(\varphi).
\end{align*}
\end{proof}

Now we are in the position to show that, even in the case $n=1$, the converse of Proposition \ref{w.p} does not hold.  This establishes that, not only regarding proofs, but also regarding results, the generalized and the classical theories are not identical.

\begin{proposition}\label{w.rcp}
 Let $k>1$ and suppose that there exists a surjective polynomial $R\in\mathcal{P}(^{mk}E;\ell_1)$. Then there exists a weak*-weak*-continuous operator $T\in\mathcal{L}(\mathcal{P}(^k \ell_1);\mathcal{P}(^{mk}E))$ such that $T\not=\Delta^1_k P$ for every $P\in\mathcal{P}(^m E;\ell_1)$.
\end{proposition}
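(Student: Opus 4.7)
The approach is to use Lemma \ref{diagrama.delta.c} to transport the problem into the world of symmetric projective tensor products, where the desired $T$ can be constructed as the adjoint of a well-chosen operator. Since $L^E_{mk}$ and $L^{\ell_1}_k$ are topological isomorphisms that intertwine the weak* topologies (Definition \ref{ndef}), any operator $S \in \mathcal{L}(\widehat{\otimes}^{mk,s}_{\pi}E; \widehat{\otimes}^{k,s}_{\pi}\ell_1)$ gives rise to a weak*-weak*-continuous
\[ T := (L^E_{mk})^{-1}\circ S^* \circ L^{\ell_1}_k \in \mathcal{L}(\mathcal{P}(^k \ell_1); \mathcal{P}(^{mk}E)). \]
Combining Lemma \ref{diagrama.delta.c} with injectivity of the adjoint operation and uniqueness of polynomial linearizations, $T$ equals $\Delta^1_k P$ for some $P \in \mathcal{P}(^m E; \ell_1)$ if and only if $S(\otimes^{mk}x) = \otimes^k P(x)$ for every $x \in E$. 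Hence it suffices to exhibit a continuous $mk$-homogeneous polynomial $Q\colon E \to \widehat{\otimes}^{k,s}_{\pi}\ell_1$ that cannot be written as $x \mapsto \otimes^k P(x)$ for any $P \in \mathcal{P}(^m E; \ell_1)$, and then take $S := Q_L$.

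Using the surjectivity of $R$, I would define
\[ Q(x) := (e_1^* \circ R)(x)\,\otimes^k e_1 + (e_2^* \circ R)(x)\,\otimes^k e_2, \]
where $e_1^*, e_2^* \in \ell_1^*$ are the first two coordinate functionals. Each $e_i^* \circ R$ is a scalar $mk$-homogeneous polynomial on $E$, so $Q$ is a continuous $mk$-homogeneous polynomial into $\widehat{\otimes}^{k,s}_{\pi}\ell_1$; moreover, surjectivity of $R$ ensures that neither $e_1^* \circ R$ nor $e_2^* \circ R$ is identically zero.

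Argue by contradiction: suppose $Q(x) = \otimes^k P(x)$ for some $P \in \mathcal{P}(^m E; \ell_1)$, and set $P_i := e_i^* \circ P \in \mathcal{P}(^m E)$ and $R_i := e_i^* \circ R \in \mathcal{P}(^{mk}E)$. For each $\alpha \in \mathbb{K}$, the polynomial $q_\alpha(y) := (y_1 + \alpha y_2)^k$ lies in $\mathcal{P}(^k \ell_1)$, and applying its linearization to both sides of $Q(x) = \otimes^k P(x)$ yields
\[ (P_1(x) + \alpha P_2(x))^k = R_1(x) + \alpha^k R_2(x) \qquad \text{for all } x \in E,\ \alpha \in \mathbb{K}. \]
Expanding the left-hand side via the binomial theorem and comparing coefficients of $\alpha^j$ for $1 \le j \le k-1$ (the range is nonempty because $k > 1$) forces $P_1^{\,k-j}P_2^{\,j} \equiv 0$ on $E$. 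Since the algebra of scalar polynomial functions on $E$ has no zero divisors (restricting to the two-dimensional subspace spanned by witnesses $x_1, x_2$ of $P_1(x_1)\neq 0, P_2(x_2)\neq 0$ reduces the question to $\mathbb{K}[s,t]$ being an integral domain), one obtains $P_1 \equiv 0$ or $P_2 \equiv 0$, whence $R_1 \equiv 0$ or $R_2 \equiv 0$, contradicting the non-vanishing of $R_1, R_2$.

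The main obstacle I expect is the careful bookkeeping in the reduction step, in particular the passage from the equality $S^* = [(\delta^k_{\ell_1}\circ P)_L]^*$ all the way down to $Q(x) = \otimes^k P(x)$; the polynomial-identity argument for the contradiction is routine once the right $Q$ has been identified, and the argument breaks down exactly when $k = 1$, in agreement with the classical converse for the Aron--Schottenloher case.
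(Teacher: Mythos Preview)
Your proof is correct and shares the paper's overall framework---reduce via Lemma~\ref{diagrama.delta.c} to producing a polynomial $Q\in\mathcal{P}(^{mk}E;\widehat{\otimes}^{k,s}_{\pi}\ell_1)$ that is not of the form $x\mapsto\otimes^k P(x)$, then set $T=(L^E_{mk})^{-1}\circ (Q_L)^*\circ L^{\ell_1}_k$---but the choice of $Q$ and the contradiction are genuinely different. The paper invokes the Arias--Farmer isomorphism $\widehat{\otimes}^{k,s}_{\pi}\ell_1\cong\ell_1$ to make $Q:=I\circ R$ \emph{surjective} onto the completed tensor product, and then observes that $\otimes^k P(x)$ always lands in the uncompleted $\otimes^{k,s}_{\pi}\ell_1$, which is a proper (incomplete) subspace; this yields the contradiction in one structural stroke. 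Your argument instead builds a rank-two $Q$ and derives a contradiction by elementary polynomial algebra (binomial expansion plus the integral-domain property of polynomial functions). Your route is more hands-on but also more self-contained: it avoids the Arias--Farmer result and the completeness discussion, and it actually uses far less than the full surjectivity of $R$---only that $e_1^*\circ R$ and $e_2^*\circ R$ are nonzero. One small point worth making explicit in your write-up is that the constant and top coefficients in $\alpha$ give $R_i=P_i^{\,k}$, which is what turns $P_i\equiv 0$ into $R_i\equiv 0$; you use this implicitly in the final line.
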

\begin{proof}
As $\widehat{\otimes}^{k,s}_{\pi}\ell_1$ is topologically isomorphic to $\widehat{\otimes}^{k}_{\pi}\ell_1$ \cite[Lemma 5.2]{arias} and the latter space is topologically isomorphic to $\ell_1$, we can consider a topological isomorphism $I \colon \ell_1 \longrightarrow \widehat{\otimes}^{k,s}_{\pi}\ell_1$. Since  $R\in\mathcal{P}(^{mk}E;\ell_1)$ is a surjective polynomial, $Q:=I\circ R$ is surjective as well. Define $$T:=(L^E_{mk})^{-1}\circ (Q_L)^{*}\circ L^F_k\in\mathcal{L}(\mathcal{P}(^kF);\mathcal{P}(^{mk}E)),$$
and note that $T$ is weak*-weak*-continuous. Suppose that there exists a polynomial $P\in\mathcal{P}(^m E;\ell_1)$ such that $T=\Delta^1_k P$. In this case, Lema \ref{diagrama.delta.c} gives $(Q_L)^{*}=\left[(\delta^k_F\circ P)_L\right]^{*}$, hence $ Q=\delta^k_F\circ P$, that is, $Q(x)=\otimes^k P(x)$ for every $x\in E$. Therefore,
$$ \widehat{\otimes}^{k,s}_{\pi}\ell_1=Q(E)\subseteq\otimes^{k,s}_{\pi}\ell_1\subseteq\widehat{\otimes}^{k,s}_{\pi}\ell_1,$$
from which we would conclude that the incomplete space $\otimes^{k,s}_{\pi}\ell_1$ is complete. This contradiction completes the proof.
\end{proof}

The next example completes the failure of the converse of Proposition \ref{w.p}.

\begin{example}\rm \label{exdif} Let $m,k \in \mathbb{N}$ be such that $mk$ is odd and $k > 1$. The map $$R\colon\ell_{mk}\longrightarrow\ell_1~,~R((\lambda_j)_{j\in\mathbb{N}})=(\lambda_j^{mk})_{j\in\mathbb{N}},$$
is a surjective continuous $mk$-homogeneous polynomial in both the real and complex cases. By Proposition \ref{w.rcp} there exists a weak*-weak*-continuous operator $T\in\mathcal{L}(\mathcal{P}(^k \ell_1);\mathcal{P}(^{mk}\ell_{mk}))$ such that $T\not=\Delta^1_k P$ for every $P\in\mathcal{P}(^m \ell_{mk};\ell_1)$.
%
\end{example}




Now that we know, as expected, that linear results may fail in the general theory, we proceed in the opposite direction, namely, our next aim is to show that results that are unsuspected in the linear case hold in the generalized theory.


Given $x^* \in E^*$ and $y \in F$, by $(x^*)^m\otimes b$ we mean the $m$-homogeneous polynomial defined by
$$\left((x^*)^m\otimes b\right)(x) = x^*(x)^mb {\rm ~for~every~} x \in E. $$
Linear combinations of polynomials of this kind are called {\it $m$-homogeneous polynomials of finite type} (see \cite{livrodineen}). According to \cite[page 42]{livrodineen}, a polynomial $P \in {\cal P}(^mE;F)$ is of finite type if and only if $P$ is a linear combination of polynomials of the type
$$x \in E \mapsto x_1^*(x) \cdots x_m^*(x)b,$$
 where $x_1^*, \ldots, x_m^* \in E^*$ and $b\in F$.

 A homogeneous polynomial, like any other nonlinear map between linear spaces, has {\it finite rank} if the subspace generated by its range in the target space is finite dimensional (see \cite{mujicatams}). It is well known that a polynomial $P \in {\cal P}(^mE;F)$ has finite rank if and only if $P$ is a linear combination of polynomials of the type $x \in E \mapsto q(x)b,$
 where $q \in {\cal P}(^mE)$ and $b\in F$. Of course, polynomials of finite type have finite rank.
%
%
%
%
%

According to what happens with the adjoint of a linear operator and with the Aron-Schottenloher adjoint of a homogeneous polynomial (see \cite[Lemma 2.1]{jmaaleticia}), it is expected that if the polynomial $P$ is of finite type (has finite rank, respectively), then $\Delta_k^n P$ is of finite type (has finite rank, respectively) as well. For linear operators, being of finite type is the same of being of finite rank, and this is the reason why the following more general property has been disclosed only in our generalized setting.

\begin{proposition} \label{delta.ida}
If the polynomial $P \in {\cal P}(^mE;F)$ has finite rank, then $\Delta_k^n P$ is of finite type for all $k,n \in \mathbb{N}$.
\end{proposition}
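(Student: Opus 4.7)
The plan is to use a finite-rank factorization of $P$ to reduce $\Delta_k^n P$ to a polynomial in $q$ that depends on $q$ only through finitely many linear functionals, and then invoke the classical fact that every homogeneous polynomial on a finite-dimensional space is of finite type. First I would write
\[ P(x) = \sum_{i=1}^{r} q_i(x)\,b_i, \quad q_i \in \mathcal{P}(^mE),\ b_1,\ldots,b_r \in F \text{ linearly independent}, \]
which is possible because the range of $P$ spans a finite-dimensional subspace: pick a basis $\{b_i\}$ of that span and set $q_i := b_i^* \circ P$ for any Hahn--Banach extensions of the coordinate functionals. Expanding a given $q \in \mathcal{P}(^kF)$ through its symmetric $k$-linear form then yields
\[ q(P(x)) = \sum_{|\alpha|=k}\binom{k}{\alpha}\,\check{q}\bigl(b_1^{(\alpha_1)},\ldots,b_r^{(\alpha_r)}\bigr)\,q_1(x)^{\alpha_1}\cdots q_r(x)^{\alpha_r}. \]
Setting $\varphi_\alpha(q):=\binom{k}{\alpha}\,\check{q}(b_1^{(\alpha_1)},\ldots,b_r^{(\alpha_r)})$, linearity and continuity of polarization make $\varphi_\alpha$ a continuous linear functional on $\mathcal{P}(^kF)$, while $Q_\alpha:=q_1^{\alpha_1}\cdots q_r^{\alpha_r}\in\mathcal{P}(^{mk}E)$ does not depend on $q$. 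So $q\circ P = \sum_{|\alpha|=k}\varphi_\alpha(q)\,Q_\alpha$.

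Next I would raise this to the $n$-th power via the multinomial theorem, obtaining
\[ \Delta_k^n P(q) = \sum_{|\beta|=n}\binom{n}{\beta}\!\left(\prod_\alpha \varphi_\alpha(q)^{\beta_\alpha}\right)R_\beta, \]
with $R_\beta:=\prod_\alpha Q_\alpha^{\beta_\alpha}\in\mathcal{P}(^{mnk}E)$ independent of $q$, the $\alpha$'s ranging over the multi-indices with $|\alpha|=k$, and $\beta=(\beta_\alpha)_\alpha$ over those with $\sum_\alpha \beta_\alpha=n$. This exhibits $\Delta_k^n P$ as the composition of the continuous linear map $L\colon \mathcal{P}(^kF)\to\mathbb{K}^s$, $L(q):=(\varphi_\alpha(q))_\alpha$, followed by a fixed $n$-homogeneous polynomial from $\mathbb{K}^s$ into $\mathcal{P}(^{mnk}E)$ (with coefficients in the fixed finite family $\{R_\beta\}$).

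The final step is to invoke the classical fact that every $n$-homogeneous polynomial on a finite-dimensional space is of finite type, equivalently that the $n$-fold symmetric tensor power of any finite-dimensional space is spanned by pure symmetric tensors $\psi^{\otimes n}$. Applied to each scalar monomial $z\mapsto z^\beta$ on $\mathbb{K}^s$ this produces scalars $\mu_{\beta,j}$ and linear forms $\psi_{\beta,j}\in(\mathbb{K}^s)^*$ with $\prod_\alpha z_\alpha^{\beta_\alpha}=\sum_j \mu_{\beta,j}\,\psi_{\beta,j}(z)^n$. Substituting $z=L(q)$ and setting $\widetilde\psi_{\beta,j}:=\psi_{\beta,j}\circ L\in(\mathcal{P}(^kF))^*$ rewrites the previous display as a finite linear combination of polynomials of the form $q\mapsto \widetilde\psi(q)^n\,R$ with $\widetilde\psi\in(\mathcal{P}(^kF))^*$ and $R\in\mathcal{P}(^{mnk}E)$, which is precisely the required finite-type presentation. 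The step I expect to be the main obstacle is exactly this last passage: the product $\prod_\alpha \varphi_\alpha(q)^{\beta_\alpha}$ is visibly $n$-homogeneous in $q$ but not visibly an $n$-th power of a single linear form, and it is only the factoring through a finite-dimensional quotient, combined with the finite-dimensional coincidence of finite type with $n$-homogeneity, that turns the expression into the desired sum of $n$-th powers.
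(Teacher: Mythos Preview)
Your argument is correct and follows essentially the same route as the paper: write $P$ in finite-rank form, expand $q(P(x))$ via the symmetric $k$-linear form (Leibniz/multinomial), then take the $n$-th power by the multinomial formula, arriving at a finite sum of terms of the form $\bigl(\prod_\alpha \varphi_\alpha(q)^{\beta_\alpha}\bigr)R_\beta$ with $\varphi_\alpha\in(\mathcal{P}(^kF))^*$ and $R_\beta\in\mathcal{P}(^{mnk}E)$.

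The only divergence is in the very last step, which you single out as the ``main obstacle.'' The paper does not factor through a finite-dimensional quotient; instead it uses directly the equivalent description of finite-type polynomials recalled just before the proposition (from \cite[p.~42]{livrodineen}): a polynomial is of finite type iff it is a linear combination of maps $q\mapsto \psi_1(q)\cdots\psi_n(q)\,R$ with $\psi_1,\ldots,\psi_n\in(\mathcal{P}(^kF))^*$. Each term $\prod_\alpha \varphi_\alpha(q)^{\beta_\alpha}$ is already of this form (a product of $n$ linear functionals, with repetitions), so the conclusion is immediate. Your detour through $\mathbb{K}^s$ and the finite-dimensional ``finite type $=$ $n$-homogeneous'' fact is correct but unnecessary; it essentially reproves that equivalence via polarization.
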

\begin{proof} Let $l \in \mathbb{N}$, $P_1,\ldots, P_l\in\mathcal{P}(^m E)$ and $b_1,\ldots, b_l\in F$ be such that $P(x) =\sum\limits_{j=1}^{l}P_j(x)b_j$ for every $x\in E$. For $q\in\mathcal{P}(^k F)$ and $x\in E$, from the Leibniz Formula \cite[Theorem 1.8]{mujicalivro} and the Multinomial Formula \cite[page 33]{berge}, we have
\begin{align*}
[\Delta^n_k P](q)(x)&=q(P(x))^n =\left[q\left(\sum_{j=1}^{l}P_j(x) b_j\right)\right]^n=\left[\check{q} \left(\sum_{j=1}^{l}P_j(x) b_j\right)^{(k)} \right]^n\\
&=\left[\sum_{\atop
{k_1 + \cdots +k_l = k} }
\frac{k!}{\prod\limits_{i=1}^{l} k_i!} {P_1(x)^{k_1}\cdots P_l(x)^{k_l}\check{q}\left(b_1^{(k_1)},\ldots, b_l^{(k_l)}\right)}\right]^n\\
&=\displaystyle\sum_{\atop
{\sum\limits_{\atop
{k_1 + \cdots +k_l = k} }^{}} \alpha_{k_1,\ldots, k_l} =n}
\left(\frac{n!}{\displaystyle\prod_{k_1+\cdots+k_l=k} \alpha_{k_1,\ldots,k_l}!}\right) {\prod_{k_1+\cdots+k_l=k}^{} C_{k_1,\ldots,k_l}},
\end{align*}
where $C_{k_1,\ldots,k_l}= \left[\frac{k!}{\prod\limits_{i=1}^{l} k_i!} \left(\prod\limits_{i=1}^{l} P_i(x)^{k_i} \right)\check{q}\left(b_1^{(k_1)},\ldots, b_l^{(k_l)}\right)\right]^{\alpha_{k_1,\ldots,k_l}}$. Then
\begin{align*}
\displaystyle\prod_{k_1+\cdots+k_l=k}^{}& C_{k_1,\ldots,k_l}= \displaystyle\prod_{k_1+\cdots+k_l=k}^{}\left[\frac{k!}{\prod\limits_{i=1}^{l} k_i!} \left(\prod_{i=1}^{l} P_i(x)^{k_i} \right)\check{q}\left(b_1^{(k_1)},\ldots, b_l^{(k_l)}\right)\right]^{\alpha_{k_1,\ldots,k_l}}\\
&= \displaystyle\prod_{k_1+\cdots+k_l=k}^{}\left[\left(\frac{k!}{\prod\limits_{i=1}^{l} k_i!}\right)^{\alpha_{k_1,\ldots,k_l}} \left(\prod_{i=1}^{l} P_i(x)^{k_i \alpha_{k_1,\ldots,k_l}} \right)\check{q}\left(b_1^{(k_1)},\ldots, b_l^{(k_l)}\right)^{\alpha_{k_1,\ldots,k_l}}\right]\\
&=(k!)^n K_{(\alpha_{k_1,\ldots,k_l}:k_1+\cdots+k_l=k)} P_{(\alpha_{k_1,\ldots,k_l}:k_1+\cdots+k_l=k)}
 (x)Q_{(\alpha_{k_1,\ldots,k_l}:k_1+\cdots+k_l=k)}(q),
\end{align*}
where $$K_{(\alpha_{k_1,\ldots,k_l}:k_1+\cdots+k_l=k)}= \displaystyle\prod_{k_1+\cdots+k_l=k}^{} \left(\frac{1}{\left(\prod\limits_{i=1}^{l} k_i!\right)^{\alpha_{k_1,\ldots,k_l}}}\right),$$
$$P_{(\alpha_{k_1,\ldots,k_l}:k_1+\cdots+k_l=k)}(x)=\displaystyle\prod_{k_1+\cdots+k_l=k}^{} \left(  \prod_{i=1}^{l} P_i(x)^{k_i \alpha_{k_1,\ldots,k_l}} \right),$$
$$Q_{(\alpha_{k_1,\ldots,k_l}:k_1+\cdots+k_l=k)}(q)=\displaystyle\prod_{k_1+\cdots+k_l=k}^{}\left[ \check{q}\left(b_1^{(k_1)},\ldots, b_l^{(k_l)}\right)^{\alpha_{k_1,\ldots,k_l}} \right].$$
Therefore,
\begin{align*}
&[\Delta^n_k P](q)(x)= \\
& \displaystyle\sum_{\atop
{\sum\limits_{\atop
{k_1 + \cdots +k_l = k} }^{}} \alpha_{k_1,\ldots, k_l} =n}^{}
 { \Theta_{(\alpha_{k_1,\ldots,k_l}:k_1+\cdots+k_l=k)} P_{(\alpha_{k_1,\ldots,k_l}:k_1+\cdots+k_l=k)}(x)
 Q_{(\alpha_{k_1,\ldots,k_l}:k_1+\cdots+k_l=k)}(q)}=\\
 & \left(\displaystyle\sum_{\atop
{\sum\limits_{\atop
{k_1 + \cdots +k_l = k} }^{}} \alpha_{k_1,\ldots, k_l} =n}^{}
 { \Theta_{(\alpha_{k_1,\ldots,k_l}:k_1+\cdots+k_l=k)} P_{(\alpha_{k_1,\ldots,k_l}:k_1+\cdots+k_l=k)}
 Q_{(\alpha_{k_1,\ldots,k_l}:k_1+\cdots+k_l=k)}}\right)(q)(x),
\end{align*}
where $$\Theta_{(\alpha_{k_1,\ldots,k_l}:k_1+\cdots+k_l=k)}=\left(\frac{n!(k!)^n}{\displaystyle\prod_{k_1+\cdots+k_l=k}^{} \alpha_{k_1,\ldots,k_l}!}\right) K_{(\alpha_{k_1,\ldots,k_l}:k_1+\cdots+k_l=k)}.$$

Note that $P_{(\alpha_{k_1,\ldots,k_l}:k_1+\cdots+k_l=k)}\in\mathcal{P}(^{mnk}E)$ and   the polynomial $Q_{(\alpha_{k_1,\ldots,k_l}:k_1+\cdots+k_l=k)}\in\mathcal{P}(^n \mathcal{P}(^k F))$ is of finite type because $$Q_{(\alpha_{k_1,\ldots,k_l}:k_1+\cdots+k_l=k)}=\displaystyle\prod_{k_1+\cdots+k_l=k}^{}\left[\Psi_{\left(b_1^{(k_1)},\ldots, b_l^{(k_l)}\right)}\right]^{\alpha_{k_1,\ldots,k_l}},$$ where
$$
\Psi_{\left(b_1^{(k_1)},\ldots, b_l^{(k_l)}\right)} \colon \mathcal{P}(^k F) \longrightarrow \mathbb{K},~
\Psi_{\left(b_1^{(k_1)},\ldots, b_l^{(k_l)}\right)}(q)=\check{q}\left(b_1^{(k_1)},\ldots, b_l^{(k_l)}\right)$$
is a continuous linear functional. It follows that $\Delta_k^n P$ is of finite type.
\end{proof}

We finish this section with a partial converse of the proposition above. The proof is illustrative of the interplay between linear and nonlinear arguments.

\begin{proposition}\label{delta.r} The following are equivalent for a polynomial  $P\in\mathcal{P}(^m E;F)$:\\
{\rm (a)} $P$ has finite rank.\\
{\rm (b)} $\Delta^1_k P$ is a finite rank operator for every $k \in \mathbb{N}$.\\
{\rm (c)} $\Delta^1_k P$ is a finite rank operator for some $k \in \mathbb{N}$.
\end{proposition}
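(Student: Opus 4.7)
My plan is to prove the cycle (a) $\Rightarrow$ (b) $\Rightarrow$ (c) $\Rightarrow$ (a). The implication (a) $\Rightarrow$ (b) is immediate from Proposition \ref{delta.ida} with $n=1$, since every polynomial of finite type is, in particular, a linear operator of finite rank. The implication (b) $\Rightarrow$ (c) is trivial. So the work is concentrated in (c) $\Rightarrow$ (a), and the strategy is to leave the polynomial world, transfer the finite-rank hypothesis to a classical linear operator between projective symmetric tensor products, and then return to $P$ by a nonlinear argument.

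Suppose $\Delta^1_k P$ has finite rank for some $k\in\mathbb{N}$. Since $L^F_k$ and $L^E_{mk}$ are topological isomorphisms, Lemma \ref{diagrama.delta.c} yields that the linear adjoint $[(\delta^k_F\circ P)_L]^{*}$ also has finite rank. Invoking the classical Banach-space fact that a bounded operator has finite rank if and only if its adjoint does, we conclude that the linearization $(\delta^k_F\circ P)_L\colon \widehat{\otimes}^{mk,s}_{\pi} E\longrightarrow \widehat{\otimes}^{k,s}_{\pi} F$ has finite rank. In particular, $V:=\operatorname{span}\{\otimes^k P(x):x\in E\}$ is a finite-dimensional subspace of $\widehat{\otimes}^{k,s}_{\pi} F$.

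It remains to reconstruct from this finite-dimensionality at the level of $k$-fold tensor powers the finite rank of $P$ itself, and this is the nonlinear heart of the argument. The plan is to establish the following claim: if $y_1,\ldots,y_N\in F$ are linearly independent, then $\otimes^k y_1,\ldots,\otimes^k y_N$ are linearly independent in $\widehat{\otimes}^{k,s}_{\pi} F$. To prove this, use Hahn--Banach to pick $\ell_1,\ldots,\ell_N\in F^{*}$ with $\ell_j(y_i)=\delta_{ij}$ and, for each tuple of scalars $(t_1,\ldots,t_N)$, test a hypothetical relation $\sum_i\alpha_i\otimes^k y_i=0$ against the continuous linear functional $\bigl((t_1\ell_1+\cdots+t_N\ell_N)^k\bigr)_L$; this produces $\sum_i\alpha_i t_i^k=0$ identically in $(t_1,\ldots,t_N)$, forcing $\alpha_i=0$ for every $i$ by comparing coefficients (a Vandermonde-type conclusion). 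With the claim in hand, if $P$ had infinite rank one could find, for any $N$, elements $x_1,\ldots,x_N\in E$ with $P(x_1),\ldots,P(x_N)$ linearly independent, hence $\otimes^k P(x_1),\ldots,\otimes^k P(x_N)$ linearly independent inside $V$, contradicting $\dim V<\infty$. The main obstacle is precisely this claim: one needs to rule out any collapse of linear independence under the nonlinear map $y\mapsto\otimes^k y$, and the only available tool is functional-analytic (Hahn--Banach together with polynomial non-vanishing), exemplifying the mix of linear and nonlinear reasoning the authors announce.
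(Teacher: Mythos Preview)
Your proof is correct and follows the same architecture as the paper's: Proposition \ref{delta.ida} for (a)$\Rightarrow$(b), then for (c)$\Rightarrow$(a) pass through Lemma \ref{diagrama.delta.c}, use that finite rank is preserved under taking adjoints to get that $(\delta^k_F\circ P)_L$ has finite rank, and finish by showing that linearly independent vectors in $F$ have linearly independent $k$-th tensor powers.

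The only substantive difference is in this last step. The paper cites \cite[Proposition 1.1]{ryan.livro} (linear independence of elementary tensors in $\otimes^k F$) and then restricts to the diagonal subset $\{\otimes^k P(x_j)\}$, whereas you give a direct self-contained argument via dual functionals $\ell_j(y_i)=\delta_{ij}$ and evaluation of $(t_1\ell_1+\cdots+t_N\ell_N)^k$. Your route is slightly more elementary (no external citation, and you stay inside the symmetric tensor product without needing the embedding into the full tensor product); the paper's route is shorter to write down since it just invokes a known fact. Incidentally, your ``Vandermonde-type'' remark undersells the simplicity: plugging in $t_j=1$, $t_i=0$ for $i\neq j$ already gives $\alpha_j=0$ directly.
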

\begin{proof} (a) $\Longrightarrow$ (b) follows from Proposition \ref{delta.ida} and  (b) $\Longrightarrow$ (c) is obvious. To prove (c) $\Longrightarrow$ (a), assume that $\Delta^1_k P\in\mathcal{L}(\mathcal{P}(^k F);\mathcal{P}(^{mk} E))$ has finite rank. Since the class of finite rank operators is an operator ideal,   calling on Lemma \ref{diagrama.delta.c} once again we conclude that the linear operator $\left[(\delta^k_F\circ P)_L\right]^{*}\in \mathcal{L}\left(\left(\widehat{\otimes}^{k,s}_{\pi}F\right)^{*};\left(\widehat{\otimes}^{mk,s}_{\pi}E\right)^{*}\right)$ has finite rank. But the ideal of finite rank operators is completely symmetric \cite[Proposition 4.4.7]{pietschlivro}, so $(\delta^k_F\circ P)_L\in \mathcal{L}\left(\widehat{\otimes}^{mk,s}_{\pi}E;\widehat{\otimes}^{k,s}_{\pi}F\right)$ has finite rank.  Now, combining \cite[Proposition 3.2.b]{Geraldo.D.P} and \cite[Proposition 3.1.b]{mujicatams} we get that the polynomial $\delta^k_F\circ P$ has finite rank, from which we conclude that the range of $\delta^k_F\circ P$ does not contain infinitely many linearly independent vectors.

Suppose that $ P$ has not finite rank, that is, there are $(x_j)_{j=1}^\infty \subseteq  E$ such that the set $\left\{P(x_1), P(x_2), P(x_3),\ldots\right\}$ is linearly independent in $F$. By \cite[Proposition 1.1]{ryan.livro} it follows that the set $\left\{P(x_{i_1})\otimes P(x_{i_2})\otimes\cdots\otimes P(x_{i_k}):~i_1,i_2,\ldots i_k\in\mathbb{N}\right\}$ is linearly independent in $\otimes^k F$, so its subset $\left\{ \otimes^k P(x_1),\otimes^k P(x_2),\otimes^k P(x_3), \ldots \right\}$ is linearly independent in the range of $\delta^k_F\circ P $. This contradiction shows that $P$ has finite rank.
\end{proof}

Many other results related to the ones proved in this section can be obtained. We refrain from going further because we believe that thus far the reader is convinced that the $\Delta_k^n P$'s are genuine generalizations of $u^*$ and $P^*$ and that the general theory deserves to be investigated.

\section{Applications to composition operators}
In this section we show how our generalized adjoints can be used to take several results on composition operators due to Lindstr\"om and Schl\"uchtermann \cite{lindstrom} beyond their original scope.

Operator ideals will be taken in the sense of Pietsch \cite{df, pietschlivro}, ideals of homogeneous polynomials (polynomial ideals) in the sense of Garc\'ia and Floret \cite{domingoklaus}, two-sided polynomial ideals in the sense of \cite{ewertonjmaa} and polynomial hyper-ideals in the sense of \cite{arxiv}. For the sake of the reader, we recall these concepts next.

\begin{definition}\rm Let ${\cal Q}$ be a subclass of the class of homogeneous polynomials between Banach spaces such that, for every $m$ and any Banach spaces $E$ and $F$, the component
$${\cal Q}(^mE;F) := {\cal P}(^mE;F) \cap {\cal Q} $$
is a linear subspace of ${\cal P}(^mE;F)$ containing the polynomials of finite type. The class $\cal Q$ is said to be:\\
(a) A {\it polynomial ideal} if $t \circ P \circ u \in {\cal Q}(^mE;H)$ whenever $t \in {\cal L}(G;H)$, $P \in {\cal Q}(^mF;G)$ and $u \in {\cal L}(E;F)$.\\
(b) A {\it polynomial hyper-ideal} if $t \circ P \circ Q \in {\cal Q}(^{mn}E;H)$ whenever $t \in {\cal L}(G;H)$, $P \in {\cal Q}(^mF;G)$ and $Q \in {\cal P}(^nE;F)$.\\
(c) A {\it polynomial two-sided ideal} if $R \circ P \circ Q \in {\cal Q}(^{mnk}E;H)$ whenever $R \in {\cal P}(^kG;H)$, $P \in {\cal P}(^mF;G)$ and $Q \in {\cal P}(^nE;F)$.

Suppose that there is a function $\|\cdot\|_{\cal Q} \colon {\cal Q} \longrightarrow \mathbb{R}$ whose restriction to each component ${\cal Q}(^mE;F)$ is a complete norm and such that $\|\lambda \in \mathbb{K} \mapsto \lambda^m\|_{\cal Q} = 1$ for every $m$. $({\cal Q}, \|\cdot\|_{\cal Q})$ is said to be:\\
(a') A {\it Banach polynomial ideal} if, in (a), $\|t \circ P \circ u\|_{\cal Q} \leq \|t\|\cdot\|P\|_{\cal Q}\cdot \|u\|$.\\
(b') A {\it Banach polynomial hyper-ideal ideal} if, in (b), $\|t \circ P \circ Q\|_{\cal Q} \leq \|t\|\cdot\|P\|_{\cal Q}\cdot \|Q\|^m$.\\
(c') A {\it Banach polynomial two-sided ideal} if, in (c), $\|R \circ P \circ Q\|_{\cal Q} \leq \|R\|\cdot\|P\|_{\cal Q}^k\cdot \|Q\|^{mk}$.
\end{definition}

Just to illustrate that the three concepts above are worth to be considered, we mention that: (i) the class of nuclear polynomials is a polynomial ideal that fails to be a hyper-ideal; (ii) the class of weakly compact polynomials is a hyper-ideal that fails to be a two-sided ideal; (iii) the class of compact polynomials is a two-sided ideal.

The proof of the following lemma is easy and we omit it.

\begin{lemma}\label{lematc1} Let $E, F, E_1, F_1$ be Banach spaces and $m, r, s\in \mathbb{N}$. If $R\in \mathcal{P}(^r E;F)$, $B\in \mathcal{P}(^s E_1;F_1)$ are non-zero polynomials and $\left(\mathcal{Q},\|\cdot\|_{\mathcal{Q}}\right)$ is a two-sided polynomial Banach ideal, then the map
$$S_{RB}\colon \mathcal{Q}(^m F_1;E)\longrightarrow \mathcal{Q}(^{mrs} E_1;F)~,~S_{RB}(P)=R\circ P\circ B,$$
is a well defined continuous $r$-homogeneous polynomial.
\end{lemma}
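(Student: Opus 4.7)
The plan is to first confirm that $S_{RB}$ lands inside $\mathcal{Q}(^{mrs}E_1;F)$, then exhibit an explicit symmetric $r$-linear map that generates it, and finally control its norm. Well-definedness is immediate: applying the two-sided ideal property (c) with $R\in\mathcal{P}(^r E;F)$ in the outer position, $P\in\mathcal{Q}(^m F_1;E)$ in the middle and $B\in\mathcal{P}(^s E_1;F_1)$ in the inner position yields $R\circ P\circ B\in\mathcal{Q}(^{mrs}E_1;F)$, together with the bound $\|R\circ P\circ B\|_{\mathcal{Q}}\leq \|R\|\,\|P\|_{\mathcal{Q}}^r\,\|B\|^{mr}$ coming from (c').

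Let $\check{R}$ denote the symmetric $r$-linear operator associated with $R$. I would set
$$
A\colon \mathcal{Q}(^m F_1;E)^r\longrightarrow \mathcal{Q}(^{mrs}E_1;F),\quad A(P_1,\ldots,P_r)(x)=\check{R}\bigl(P_1(B(x)),\ldots,P_r(B(x))\bigr),
$$
observe that symmetry and $r$-linearity of $A$ in $(P_1,\ldots,P_r)$ follow at once from the corresponding properties of $\check{R}$, and note that $A(P,\ldots,P)=S_{RB}(P)$ by the definition of $\check{R}$. The main obstacle is to verify that $A(P_1,\ldots,P_r)$ really belongs to the $\mathcal{Q}$-component, not merely to $\mathcal{P}(^{mrs}E_1;F)$. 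For this I would invoke the polarization formula
$$
\check{R}(y_1,\ldots,y_r)=\frac{1}{r!\,2^r}\sum_{\epsilon\in\{-1,1\}^r}\epsilon_1\cdots\epsilon_r\,R(\epsilon_1 y_1+\cdots+\epsilon_r y_r)
$$
to rewrite $A(P_1,\ldots,P_r)$ as a finite linear combination of polynomials of the form $R\circ(\epsilon_1 P_1+\cdots+\epsilon_r P_r)\circ B$. Since $\mathcal{Q}(^m F_1;E)$ is a linear subspace, each inner sum lies in $\mathcal{Q}(^m F_1;E)$, so a further application of (c) places each such composition in $\mathcal{Q}(^{mrs}E_1;F)$, and the same therefore holds for the linear combination.

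To obtain continuity of $A$ for the $\mathcal{Q}$-norm, I would bound each term of the polarization expansion via (c'):
$$
\|A(P_1,\ldots,P_r)\|_{\mathcal{Q}}\leq \frac{1}{r!}\,\|R\|\,\|B\|^{mr}\Big(\sum_{i=1}^r\|P_i\|_{\mathcal{Q}}\Big)^r.
$$
A routine normalization argument (replace each nonzero $P_i$ by $P_i/\|P_i\|_{\mathcal{Q}}$ and exploit the $r$-linearity of $A$) upgrades this to the genuinely multilinear estimate
$$
\|A(P_1,\ldots,P_r)\|_{\mathcal{Q}}\leq \frac{r^r}{r!}\,\|R\|\,\|B\|^{mr}\prod_{i=1}^r\|P_i\|_{\mathcal{Q}}.
$$
Hence $A$ is a continuous symmetric $r$-linear map generating $S_{RB}$, which is therefore a continuous $r$-homogeneous polynomial; the non-vanishing of $R$ and $B$ enters only to keep the factors above nontrivial, the conclusion being otherwise vacuous when either factor is zero.
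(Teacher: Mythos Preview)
Your proof is correct. The paper omits the proof of this lemma entirely (``The proof of the following lemma is easy and we omit it''), so there is nothing to compare against; your argument via the two-sided ideal inequality (c') for well-definedness, the explicit symmetric $r$-linear map $A(P_1,\ldots,P_r)(x)=\check{R}(P_1(B(x)),\ldots,P_r(B(x)))$, and the polarization formula to show $A$ takes values in $\mathcal{Q}(^{mrs}E_1;F)$ with the correct norm bound, is exactly the kind of routine verification the authors had in mind.
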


The dual of an operator ideal $\cal I$ is the operator ideal defined by
$${\cal I}^{\rm dual}(E;F) = \{u \in {\cal L}(E;F) : u^* \in {\cal I}(F^*;E^*)\},$$
and the polynomial dual of an operator ideal $\cal I$ is the polynomial ideal defined by (see \cite{monat})
$${\cal I}^{\mathcal{P}-{\rm dual}}(^mE;F) = \{P \in {\cal P}(^mE;F) : P^* \in {\cal I}(F^*;{\cal P}(^mE)\}.$$
Similarly, given a polynomial ideal $\cal Q$ and $k,n \in \mathbb{N}$, we define
$$\Delta_k^n {\cal Q}(^mE;F) = \{P \in {\cal P}(^mE;F) : \Delta_k^nP \in {\cal Q}(^n{\cal P}(^kF); {\cal P}(^{mnk}E))\}.$$
To recover the original concepts, note that the linear component ${\cal Q}_1$ of a polynomial ideal $\cal Q$ is an operator ideal and 
$$\Delta_1^1 {\cal Q}(E;F) = \Delta_1^1 {\cal Q}(^1E;F) = {\cal Q}_1^{\rm dual}(E;F),~ \Delta_1^1 {\cal Q}(^m E;F)={\cal Q}_1^{\mathcal{P}-{\rm dual}}(^m E;F).$$


\begin{remark}\rm The algebraic structure of the class of polynomials $\Delta_n^k {\cal Q}$ shall be investigated in a forthcoming paper. For the moment we just mention that, from Proposition \ref{delta.ida} and Proposition \ref{delta.r}, we have the following: (i) for every polynomial ideal $\cal Q$,  $\Delta^n_k \mathcal{Q}$ contains the polynomials of finite rank for all $k$ and $n$; (ii) if ${\cal P}_{\cal F}$ stands for the ideal of finite rank polynomials, then  $\mathcal{P}_{\mathcal{F}}=\Delta^1_k{\cal P}_\mathcal{F}$ for every $k$.
\end{remark}

The next result is a polynomial version of \cite[Proposition 2.1]{lindstrom}.

\begin{theorem} \label{i.thm} Let $E, F, E_1, F_1$ be Banach spaces, $m, r, s\in \mathbb{N}$, $R\in \mathcal{P}(^r E;F)$ and $B\in \mathcal{P}(^s E_1;F_1)$ be non-zero polynomials. If $\mathcal{R}$ is a polynomial hyper-ideal and $\left(\mathcal{Q},\|\cdot\|_{\mathcal{Q}}\right)$ is a two-sided polynomial Banach ideal such that the polynomial
$$S_{RB}\colon\mathcal{Q}(^m F_1;E)\longrightarrow \mathcal{Q}(^{mrs} E_1;F)~,~S_{RB}(P)=R\circ P\circ B,$$
belongs to $\mathcal{R}$, then:\\
{\rm (a)} $R\in\mathcal{R}(^r E;F)$.\\
{\rm (b)} $B \in \Delta^{mr}_1 \mathcal{R}(^{s}E_1;F_1)$.
\end{theorem}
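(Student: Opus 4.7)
The strategy for both parts is the same: factorize the target polynomial in the form $t\circ S_{RB}\circ Q$ with $t$ linear and $Q$ a continuous homogeneous polynomial. Since $S_{RB}\in\mathcal{R}$ and $\mathcal{R}$ is a hyper-ideal, closure under compositions $t\circ P\circ Q$ with $t\in\mathcal{L}$ and $Q\in\mathcal{P}$ will immediately deliver the conclusion. The freedom of $Q$ to be \emph{nonlinear} is crucial: it is what allows one to produce the nonlinear object $\Delta^{mr}_{1}B$ in part (b).

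For \textbf{(a)}, using $B\neq 0$ pick $x_0\in E_1$ with $B(x_0)\neq 0$ and, by Hahn–Banach, $\varphi\in F_1^{*}$ with $\varphi(B(x_0))=1$. Define the maps
$$j\colon E\longrightarrow \mathcal{Q}(^mF_1;E),\ j(a)=\varphi^m\otimes a,\quad \mathrm{ev}_{x_0}\colon\mathcal{Q}(^{mrs}E_1;F)\longrightarrow F,\ \mathrm{ev}_{x_0}(P)=P(x_0).$$
Both are linear; $\mathrm{ev}_{x_0}$ is bounded by $\|x_0\|^{mrs}$ times the (continuous) embedding $\mathcal{Q}\hookrightarrow\mathcal{P}$, while boundedness of $j$ is checked by writing $\varphi^m\otimes a$ as a composition $i_{a}\circ(\lambda\mapsto\lambda^m)\circ\varphi$ and applying the two-sided estimate $\|R\circ P\circ Q\|_{\mathcal{Q}}\le\|R\|\,\|P\|_{\mathcal{Q}}\,\|Q\|^{mk}$ together with $\|\lambda\mapsto\lambda^m\|_{\mathcal{Q}}=1$. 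Since $R$ is $r$-homogeneous,
$$\mathrm{ev}_{x_0}\bigl(S_{RB}(j(a))\bigr)=R\bigl(\varphi(B(x_0))^m a\bigr)=R(a),$$
so $R=\mathrm{ev}_{x_0}\circ S_{RB}\circ j$. The hyper-ideal property of $\mathcal{R}$ gives $R\in\mathcal{R}(^rE;F)$; the degrees $1\cdot r\cdot 1=r$ match.

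For \textbf{(b)}, using $R\neq 0$ pick $e_0\in E$ with $R(e_0)\neq 0$ and $\psi\in F^{*}$ with $\psi(R(e_0))=1$. Define the $m$-homogeneous polynomial and the linear map
$$G\colon F_1^{*}\longrightarrow \mathcal{Q}(^mF_1;E),\ G(y^{*})=(y^{*})^m\otimes e_0,\quad L_\psi\colon\mathcal{Q}(^{mrs}E_1;F)\longrightarrow\mathcal{P}(^{mrs}E_1),\ L_\psi(P)=\psi\circ P.$$
Continuity of $L_\psi$ is straightforward, while $G$ is shown to be a continuous $m$-homogeneous polynomial by exhibiting the generating symmetric $m$-linear map $(y_1^{*},\dots,y_m^{*})\mapsto(y_1^{*}\cdots y_m^{*})\otimes e_0$ and estimating its norm via the same two-sided ideal axioms. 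Then, for $y^{*}\in F_1^{*}$ and $x\in E_1$,
$$L_\psi\bigl(S_{RB}(G(y^{*}))\bigr)(x)=\psi\bigl(R(y^{*}(B(x))^m e_0)\bigr)=y^{*}(B(x))^{mr}=\Delta^{mr}_{1}B(y^{*})(x),$$
so $\Delta^{mr}_{1}B=L_\psi\circ S_{RB}\circ G$, which belongs to $\mathcal{R}$ by the hyper-ideal property (degrees: $1\cdot r\cdot m=mr$). By the very definition of $\Delta^{mr}_{1}\mathcal{R}$, this means $B\in\Delta^{mr}_{1}\mathcal{R}(^sE_1;F_1)$. The main obstacle in the whole argument is purely technical: correctly verifying that $j$, $\mathrm{ev}_{x_0}$, $G$, $L_\psi$ are continuous with respect to the $\mathcal{Q}$-norm, for which the two-sided polynomial Banach ideal structure of $\mathcal{Q}$ (rather than a mere polynomial ideal structure) is essential.
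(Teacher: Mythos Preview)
Your proof is correct and essentially identical to the paper's own argument: for (a) you factor $R=\mathrm{ev}_{x_0}\circ S_{RB}\circ j$ with $j(a)=\varphi^m\otimes a$ and the point evaluation, and for (b) you factor $\Delta^{mr}_1 B=L_\psi\circ S_{RB}\circ G$ with $G(y^*)=(y^*)^m\otimes e_0$ and $L_\psi(P)=\psi\circ P$, exactly as the paper does (with different notation). Your remarks on why the two-sided Banach ideal axioms of $\mathcal{Q}$ are what make $j$ and $G$ continuous, and why the hyper-ideal property of $\mathcal{R}$ is needed in (b) to absorb the $m$-homogeneous $G$, are also in agreement with the paper.
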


\begin{proof}
(a) Choose $\varphi\in F_1^*$ and $z\in E_1$ such that
$\varphi(B(z))=1$ and define
$$u_{\varphi} \colon E \longrightarrow \mathcal{Q}(^{m}F_1;E)~,~
u_{\varphi}(x)=\varphi^{m}\otimes x {\rm~(meaning ~that~} u_\varphi(x)(y) = \varphi(y)^mx),$$
$$t_{z} \colon \mathcal{Q}(^{mrs}E_1;F) \longrightarrow F~,~t_{z}(P)=P(z).$$
Using the properties of the ideal norm $\|\cdot\|_{\cal Q}$ we get $\|u_{\varphi}(x)\|_{\mathcal{Q}}=\|\varphi^{m}\otimes x\|_{\mathcal{Q}}=\|\varphi\|^{m}\|x\|$ and
$$\|t_{z}(P)\|=\|P(z)\|\leq \|P\|\cdot\|z\|^{mrs}\leq \|P\|_{\mathcal{Q}}\cdot\|z\|^{mrs},$$
from which we conclude that $u_{\varphi}\in \mathcal{L}(E;\mathcal{Q}(^m F_1;E))$ and $t_z\in\mathcal{L}(\mathcal{Q}(^{mrs}E_1;F);F)$. For every $x\in E$,
$$(t_z\circ S_{RB}\circ u_{\varphi})(x)=R([\varphi(B(z)]^{m}x))=R(x),$$
 proving that $R = t_z \circ S_{RB} \circ u_\varphi \in\mathcal{R}(^r E;F)$ by the ideal property of $\cal R$.

\medskip

\noindent(b) Choose $z\in E$ and $\psi\in F^{*}$ such that $\psi(R(z))=1$ and define
$$w_{z} \colon F_1^{*} \longrightarrow \mathcal{Q}(^{m}F_1;E)~,~w_{z}(\varphi)=\varphi^m\otimes z,$$
$$v_{\psi} \colon \mathcal{Q}(^{mrs}E_1;F) \longrightarrow \mathcal{P}(^{mrs}E_1), v_{\psi}(P)=\psi\circ P.$$
Similarly to the proof of (a) we have that $w_z$ is a continuous $m$-homogeneous polynomial and $v_\varphi$ is a continuous linear operator.
%
%
%
For every $\varphi\in F_1^{*}$,
$$
(v_\psi\circ S_{RB}\circ w_z)(\varphi) =\psi(R(z)) \Delta^{mr}_1 B(\varphi)=\Delta^{mr}_1 B(\varphi),
$$
and therefore $\Delta^{mr}_1 B\in\mathcal{R}(^{mr}F_1^{*};\mathcal{P}(^{mrs}E_1))$.
\end{proof}

Our next purpose is to give another polynomial version of \cite[Proposition 2.1]{lindstrom}, with a weaker assumption on the polynomial ideal $\cal Q$. A short preparation is needed.

Given $q \in {\cal P}(^mE)$ and $ y \in F$, by $q \otimes y$ we denote the rank 1 $m$-homogeneous polynomial given by
$$(q \otimes y)(x) = q(x)y {\rm ~for~every~} x \in E. $$

We say that Banach polynomial ideal $({\cal Q}, \|\cdot\|_{\cal Q})$ {\it contains the finite rank polynomials strongly} if for each $m \in \mathbb{N}$ there exists a constant $K_m$ such that for any Banach spaces $E$ and $F$, $q \in {\cal P}(^mE)$ and $ y \in F$, we have $P \otimes y \in {\cal Q}(^mE;F)$ and $\|q \otimes y\|_{\mathcal{Q}} \leq K_m \|q\|\cdot\|y\|$.

Banach polynomial hyper-ideals contain the finite rank polynomials strongly \cite{ewertonjmaa}.

Similarly to the definition of $\Delta^n_k\mathcal{Q}$, given an operator ideal $\mathcal{A}$ and $k\in\mathbb{N}$, define
$$\Delta_k^1 {\cal A}(^mE;F) = \{P \in {\cal P}(^mE;F) : \Delta_k^1 P \in {\cal A}({\cal P}(^kF); {\cal P}(^{mk}E))\}.$$

As well as Theorem \ref{i.thm}, the linear case of the next result recovers \cite[Proposition 2.1]{lindstrom}.
\begin{theorem} Let $E, E_1, F, F_1$ be Banach spaces, $R\in \mathcal{L}(E;F)$ and $B\in \mathcal{L}(E_1;F_1)$ be non-zero operators. If $\mathcal{A}$ is an operator ideal and $(\mathcal{Q},\|\cdot\|_{\mathcal{Q}})$ is a Banach polynomial ideal such that the bounded linear operator
$$ S_{RB}\colon\mathcal{Q}(^{m}F_1;E) \longrightarrow \mathcal{Q}(^{m}E_1;F)~,~
S_{RB}(P)=R\circ P\circ B, $$
belongs to $\mathcal{A}$, then:\\
{\rm (a)} $R\in \mathcal{A}(E;F)$.\\
{\rm (b)} $B \in \Delta_m^1{\cal A}(E_1;F_1)$ if $(\mathcal{Q},\|\cdot\|_{\mathcal{Q}})$ contains the finite rank polynomials strongly.
\end{theorem}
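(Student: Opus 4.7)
My plan is to mirror the strategy of Theorem \ref{i.thm} with the simplifications allowed by $R$ and $B$ being linear, while being careful in part (b) about where the two-sided ideal hypothesis was previously used.

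For part (a), I would pick $\varphi\in F_1^{*}$ and $z\in E_1$ with $\varphi(B(z))=1$ (possible since $B\neq 0$) and define the linear operators
$$u_{\varphi}\colon E\longrightarrow \mathcal{Q}(^{m}F_1;E),\ u_{\varphi}(x)=\varphi^{m}\otimes x,\quad t_{z}\colon \mathcal{Q}(^{m}E_1;F)\longrightarrow F,\ t_{z}(P)=P(z).$$
Continuity of $u_\varphi$ follows because finite type polynomials sit in $\mathcal{Q}$ with $\|\varphi^{m}\otimes x\|_{\mathcal{Q}}=\|\varphi\|^{m}\|x\|$ by the Banach ideal axiom on 1-dimensional polynomials, and continuity of $t_z$ is immediate from $\|P(z)\|\leq\|P\|\cdot\|z\|^{m}\leq\|P\|_{\mathcal{Q}}\cdot\|z\|^{m}$. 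A direct evaluation shows $t_z\circ S_{RB}\circ u_\varphi=R$, so the ideal property of $\mathcal{A}$ puts $R\in\mathcal{A}(E;F)$.

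For part (b), I would pick $z\in E$ and $\psi\in F^{*}$ with $\psi(R(z))=1$ and factor $\Delta^{1}_{m}B$ through $S_{RB}$ via
$$w_{z}\colon\mathcal{P}(^{m}F_1)\longrightarrow\mathcal{Q}(^{m}F_1;E),\ w_z(q)=q\otimes z,\quad v_{\psi}\colon \mathcal{Q}(^{m}E_1;F)\longrightarrow \mathcal{P}(^{m}E_1),\ v_\psi(P)=\psi\circ P.$$
The bounded linearity of $v_\psi$ is clear. The bounded linearity of $w_z$ is exactly where the assumption that $(\mathcal{Q},\|\cdot\|_{\mathcal{Q}})$ contains the finite rank polynomials strongly is used: it yields $q\otimes z\in\mathcal{Q}(^{m}F_1;E)$ with $\|q\otimes z\|_{\mathcal{Q}}\leq K_{m}\|q\|\|z\|$. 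Evaluating, for any $q\in\mathcal{P}(^{m}F_1)$ and $x\in E_1$,
$$(v_\psi\circ S_{RB}\circ w_z)(q)(x)=\psi\!\left(R\big(q(B(x))z\big)\right)=q(B(x))\psi(R(z))=q(B(x))=\Delta^{1}_{m}B(q)(x),$$
so $\Delta^{1}_{m}B=v_\psi\circ S_{RB}\circ w_z\in\mathcal{A}(\mathcal{P}(^{m}F_1);\mathcal{P}(^{m}E_1))$, which is exactly $B\in\Delta^{1}_{m}\mathcal{A}(E_1;F_1)$.

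The only delicate point is the definition of $w_z$ in (b): for a general Banach polynomial ideal, one has no control over $q\otimes z$ as $q$ varies, so without the strong containment of finite rank polynomials the target of $w_z$ need not lie inside $\mathcal{Q}$ in a bounded way. This is why the extra hypothesis is imposed, and in the two-sided setting of Theorem \ref{i.thm} it was bypassed by composing with $R$ and $B$ directly, which there were of higher degree. Everything else in the argument is routine ideal bookkeeping.
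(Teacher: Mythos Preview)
Your proof is correct and follows essentially the same route as the paper: part (a) reproduces the argument of Theorem~\ref{i.thm}(a), and part (b) uses the same factorization $\Delta^{1}_{m}B = v_\psi\circ S_{RB}\circ w_z$ with $w_z(q)=q\otimes z$ and $v_\psi(P)=\psi\circ P$, invoking the strong containment of finite rank polynomials precisely to ensure $w_z$ is bounded into $\mathcal{Q}(^{m}F_1;E)$. The only minor inaccuracy is in your closing commentary: in Theorem~\ref{i.thm} the map $w_z$ sends $\varphi\in F_1^*$ to the \emph{finite type} polynomial $\varphi^m\otimes z$, whose $\mathcal{Q}$-norm is controlled by the basic ideal axioms alone, whereas here $q\otimes z$ is merely finite rank---that, rather than the degree of $R$ and $B$, is what forces the extra hypothesis.
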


\begin{proof} The proof of (a) is similar to the proof of Theorem \ref{i.thm}(a). To prove (b), take $z\in E$ and $\varphi\in F^{*}$ such that $\varphi(R(z))=1$. Since $(\mathcal{Q},\|\cdot\|_{\mathcal{Q}})$ is a Banach polynomial ideal containing the finite rank polynomials strongly, the maps
$$
w_{z} \colon \mathcal{P}(^{m}F_1) \longrightarrow \mathcal{Q}(^{m}F_1;E)~,~w_{z}(q)=q\otimes z, {\rm ~and}$$
$$
v_{\varphi} \colon \mathcal{Q}(^{m}E_1;F) \longrightarrow \mathcal{P}(^{m}E_1)~,~ v_{\varphi}(P)=\varphi\circ P,$$ are well defined linear operators. Their continuity follow from the inequalities
$$\|w_{z}(q)\|=\|q\otimes z\|_{\mathcal{Q}}\leq K_m\|q\|\cdot \|z\|~,~\|v_{\varphi}(P)\|=\|\varphi\circ P\| \leq \|\varphi\|\cdot\|P\|\leq \|\varphi\|\cdot\|P\|_{\mathcal{Q}}.$$
It is immediate that $
v_\varphi\circ S_{RB}\circ w_z =\Delta^{1}_m B$, hence $\Delta ^1_m B\in\mathcal{A}(\mathcal{P}(^{m}F_1);\mathcal{P}(^{m}E_1))$.
\end{proof}

To give one last application of our generalized adjoints, we extend the classes of operators introduced in \cite[Proposition 2.2]{lindstrom}.

\begin{definition}\rm  Let $\mathcal{A}$ be an operator ideal and $\left(\mathcal{Q},\|\cdot\|_{\mathcal{Q}}\right)$ be a Banach polynomial ideal containing the finite rank polynomials strongly. For $m \in \mathbb{N}$, we say that an operator $R \in {\cal L}(E;F)$ belongs to  $\mathcal{A}^{comp}_{m,left}$  if for any Banach spaces $E_1$ and $F_1$ and any operator $B \in \Delta_m^1\mathcal{A}(E_1;F_1)$, the operator
$$S_R \colon \mathcal{Q}(^m F_1;E) \longrightarrow \mathcal{Q}(^m E_1;F)~,~S_R(P) = R \circ P \circ B, $$
belongs to ${\cal A}$.
\end{definition}
According to the terminology of \cite{lindstrom}, we have $\mathcal{A}^{comp}_{1,left}=\mathcal{A}^{comp}_{left}$, so the next result recovers \cite[Proposition 2.2]{lindstrom} as a particular case.

The definition of injective Banach polynomial ideal is the obvious one, namely: a polynomial ideal $\mathcal{Q}$ is {\it injective} if  $P \in {\cal Q}(^mE;F)$ whenever $P \in {\cal P}(^mE;F)$, $I \colon F \longrightarrow G$ is a metric injection and $I \circ P \in {\cal Q}(^mE;G)$. And a normed polynomial ideal $\left(\mathcal{Q},\|\cdot\|_{\mathcal{Q}}\right)$ is {\it injective}, if, in addition, $\|P\|_{\cal Q}=\|I \circ P\|_{\cal Q} $.

\begin{proposition} Let $\mathcal{A}$ be a closed operator ideal and $\left(\mathcal{Q},\|\cdot\|_{\mathcal{Q}}\right)$ be a Banach polynomial ideal containing the finite rank polynomials strongly. Then:\\
{\rm (a)} For every  $m\in \mathbb{N}$, $\mathcal{A}^{comp}_{m,left}$ is a closed operator ideal contained in $\mathcal{A}$.\\
{\rm (b)} If $\mathcal{A}$ and $\left(\mathcal{Q},\|\cdot\|_{\mathcal{Q}}\right)$ are injective ideals, then $\mathcal{A}^{comp}_{m,left}$ is injective as well for every $m\in \mathbb{N}$.
\end{proposition}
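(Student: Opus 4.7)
The strategy for part (a) is to verify the axioms of a closed operator ideal for $\mathcal{A}^{comp}_{m,left}$ by exploiting, throughout, that the assignment $R \mapsto S_R$ is linear in $R$ for each fixed admissible $B$, and then by factoring $S_R$ through elements of $\mathcal{A}$ in each case.

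For the containment $\mathcal{A}^{comp}_{m,left} \subseteq \mathcal{A}$ I would specialize to $E_1 = F_1 = \mathbb{K}$ and $B = \mathrm{id}_{\mathbb{K}}$. Here $\Delta_m^1 \mathrm{id}_{\mathbb{K}}$ is the identity of the one-dimensional space $\mathcal{P}(^m \mathbb{K})$ and so belongs to every operator ideal, making $B$ admissible. With $u_\varphi\colon E \to \mathcal{Q}(^m \mathbb{K}; E)$, $u_\varphi(x) = \mathrm{id}_{\mathbb{K}}^m \otimes x$, and $t_1\colon \mathcal{Q}(^m \mathbb{K}; F) \to F$, $t_1(P) = P(1)$ (both bounded linear, thanks to the strong containment of finite-rank polynomials and to $\|\cdot\| \le \|\cdot\|_{\mathcal{Q}}$), a direct computation yields $t_1 \circ S_R \circ u_\varphi = R$, so the ideal property of $\mathcal{A}$ puts $R$ in $\mathcal{A}$.

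The remaining axioms in (a) all proceed via ``lifts''. For the ideal property, $t \circ R \circ u$ belongs to $\mathcal{A}^{comp}_{m,left}$ by the factorization $S_{t \circ R \circ u} = \tilde{t} \circ S_R \circ \tilde{u}$, where $\tilde{u}(P) = u \circ P$ and $\tilde{t}(P') = t \circ P'$ are bounded linear maps between the relevant $\mathcal{Q}$-components (by the Banach polynomial ideal axiom applied with the identity on one side). For a rank-$1$ operator $R = \varphi \otimes y$, the factorization $S_R = w_y \circ \Delta_m^1 B \circ v_\varphi$ with $v_\varphi(P) = \varphi \circ P$ and $w_y(q) = q \otimes y$ places $S_R$ in $\mathcal{A}$ directly---strong containment of the finite-rank polynomials in $\mathcal{Q}$ is exactly what makes $w_y$ continuous---and the linearity of $R \mapsto S_R$ then extends this to all finite-rank operators. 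Closedness follows from $\|S_{R_n} - S_R\| \le \|R_n - R\| \cdot \|B\|^m$, a straightforward consequence of the Banach polynomial ideal norm axiom together with the linearity, combined with the hypothesis that $\mathcal{A}$ is closed.

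For part (b), assume $\mathcal{A}$ and $\mathcal{Q}$ are injective and let $I\colon F \hookrightarrow G$ be a metric injection with $I \circ R \in \mathcal{A}^{comp}_{m,left}$. Injectivity of $\mathcal{Q}$ guarantees that the lift $\tilde{I}\colon \mathcal{Q}(^m E_1; F) \to \mathcal{Q}(^m E_1; G)$, $\tilde{I}(P) = I \circ P$, is again a metric injection (injectivity is clear and the injective ideal-norm condition yields the isometry). The identity $S_{I \circ R} = \tilde{I} \circ S_R$ together with injectivity of $\mathcal{A}$ then forces $S_R \in \mathcal{A}$ for every admissible $B$, so $R \in \mathcal{A}^{comp}_{m,left}$. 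I expect the main obstacle to be bookkeeping rather than conceptual: keeping the several layers of lifted operators ($\tilde{t}, \tilde{u}, \tilde{I}, w_y, v_\varphi$) consistent and identifying, at each continuity estimate, which precise hypothesis on $\mathcal{Q}$ is being invoked (Banach polynomial ideal axiom, strong containment of finite-rank polynomials, or injectivity).
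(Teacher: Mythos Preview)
Your proposal is correct and follows essentially the same approach as the paper: the same factorizations $S_{\varphi\otimes y}=w_y\circ\Delta_m^1 B\circ v_\varphi$, $S_{t\circ R\circ u}=\tilde t\circ S_R\circ\tilde u$, the same norm estimate $\|S_{R_n}-S_R\|\le\|R_n-R\|\cdot\|B\|^m$ for closedness, the same specialization $E_1=F_1=\mathbb{K}$, $B=\mathrm{id}_{\mathbb{K}}$ for the inclusion in $\mathcal{A}$, and the same metric-injection lift $\tilde I$ for part (b). The only differences are notational (and your explicit remark that $\Delta_m^1\,\mathrm{id}_{\mathbb{K}}$ is the identity on a one-dimensional space is a clean justification the paper leaves implicit).
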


\begin{proof} We skip the proof that $\mathcal{A}^{comp}_{m,left}(E;F)$ is a linear subspace of $\mathcal{L}(E;F)$. Let us prove that $\mathcal{A}^{comp}_{m,left}(E;F)$ contains the operators of finite type. Let $\varphi \in E^*$, $ b\in F$, $E_1$, $F_1$ be Banach space and $B\in\Delta^1_m\mathcal{A}(E_1;F_1)$. The assumptions on $\left(\mathcal{Q},\|\cdot\|_{\mathcal{Q}}\right)$ guarantee that the maps
$$\delta_{\varphi} \colon \mathcal{Q}(^m F_1;E) \longrightarrow \mathcal{P}(^m F_1)~,~\delta_{\varphi}(P) = \varphi\circ P,$$
$$M_b \colon \mathcal{P}(^m E_1) \longrightarrow \mathcal{Q}(^m E_1;F),~ M_{b}(q) = q\otimes b,$$
are well defined continuous linear operators. For all $P \in \mathcal{Q}(^m F_1;E)$ and $x\in E_1$,
\begin{align*}S_{\varphi\otimes b}(P)(x)&=[(\varphi\otimes b)\circ P\circ B](x)=[(\varphi\circ P\circ B)\otimes b](x)= (\varphi \circ P \circ B)(x) b\\
&= (M_b\circ \Delta^1_m B\circ \delta_{\varphi})(P)(x),
\end{align*}
proving that $S_{\varphi\otimes b} = (M_b\circ \Delta^1_m B\circ \delta_{\varphi})$ belongs to
$\mathcal{A}$, that is, $\varphi \otimes b$ belongs to $\mathcal{A}^{comp}_{m,left}$.

To prove the ideal property, $G$, $H$ be Banach spaces, $A\in \mathcal{L}(G;E)$, $R\in \mathcal{A}^{comp}_{m,left}(E;F)$ and $C\in \mathcal{L}(F;H)$. Also, let $E_1$, $F_1$ be Banach spaces and $B\in \Delta^1_m\mathcal{A}(E_1;F_1)$. As we have done before, the maps 
$$\delta_{A} \colon \mathcal{Q}(^m F_1;G) \longrightarrow \mathcal{Q}(^m F_1;E)~,~\delta_{A}(P) = A\circ P, $$
$$\delta_C \colon \mathcal{Q}(^m E_1;F) \longrightarrow \mathcal{Q}(^m E_1;H)~,~ \delta_C(Q) = C\circ Q,$$
are well defined continuous linear operators and
$$S_{C\circ R\circ A}(P)= C\circ R\circ A\circ P\circ B = (\delta_C\circ S_R\circ\delta_A)(P) $$
for every $P \in \mathcal{Q}(^m F_1;G)$,
which proves that $S_{C\circ R\circ A}$ belongs to $\mathcal{A}$.

Now we check that $\mathcal{A}^{comp}_{m,left}(E;F)$ is a closed subspace of ${\cal L}(E;F)$. To do so, let $R\in\mathcal{L}(E;F)$ and $(R_n)_n$ be a sequence in $\mathcal{A}^{comp}_{m,left}(E;F)$ such that $R_n\longrightarrow R$ in the usual sup norm. Then each $S_{R_n}$ belongs to ${\cal A}$ and, for every $n$, $$
\|S_{R_n}-S_{R}\|
=\sup_{\substack{\|P\|_{\mathcal{Q}}\leq1 }}\|(R_{n}-R)\circ P\circ B\|_{\mathcal{Q}}
\leq \sup_{\substack{\|P\|_{\mathcal{Q}}\leq1 }}\|R_n-R\|\cdot\| P\|_{\mathcal{Q}}\cdot\|B\|^m
=\|R_n-R\|\cdot\|B\|^{m},
$$
so $S_{Rn} \longrightarrow S_R \in {\cal A}({\cal Q}(^mF_1;E); {\cal Q}(^mE_1;F))$ because  the ideal $\mathcal{A}$ is closed. Therefore, $R\in\mathcal{A}^{comp}_{m,left}(E;F)$.

 In order to prove that $\mathcal{A}^{comp}_{m,left}$ is contained in $ \mathcal{A}$, let
 $R\in \mathcal{A}^{comp}_{m,left}(E;F)$ be given. Choose $E_1=F_1=\mathbb{K}$,  $B=id_{\mathbb{K}}$ and consider the continuous linear operators
$$\delta\colon E \longrightarrow \mathcal{Q}(^m F_1;E)~,~\delta(x) = id ^m_{\mathbb{K}}\otimes x,$$
$$\gamma \colon \mathcal{Q}(^m E_1;F) \longrightarrow F~,~ \gamma(P) = P(1).$$
Then $R=\gamma\circ S_R\circ\delta$, and, since $S_R$ belongs to $\cal A$, it follows that $R\in \mathcal{A}(E;F)$.

Finally, assume that the operator ideal $\cal A$ and the Banach polynomial ideal $\left(\mathcal{Q},\|\cdot\|_{\mathcal{Q}}\right)$ are injective. Let $R\in \mathcal{L}(E;F)$ and let $I\colon F \longrightarrow G$ be a metric injection such that $I\circ R\in\mathcal{A}^{comp}_{m,left}(E;G)$. By definition, for all Banach spaces $E_1$, $F_1$ and any $B\in\Delta^1_m\mathcal{A}(E_1;F_1)$, we have $S_{I\circ R}\in\mathcal{A}\left( \mathcal{Q}(^m F_1;E);\mathcal{Q}(^m E_1;G)\right)$. By the injectivity of $\left(\mathcal{Q},\|\cdot\|_{\mathcal{Q}}\right)$ we know that the operator   $$\delta_I\colon \mathcal{Q}(^m E_1; F) \longrightarrow \mathcal{Q}(^m E_1;G)~,~\delta_I(P) = I\circ P,$$
is metric injection, hence the injectivity of $\mathcal{A}$ and $S_{I \circ R} = \delta_I \circ S_R$ yield that $S_R$ belongs to $ \mathcal{A}$, and therefore $R\in\mathcal{A}^{comp}_{m,left}(E;F)$.
\end{proof}

\bigskip

\noindent Faculdade de Matem\'atica~~~~~~~~~~~~~~~~~~~~~~Departamento de Matem\'atica\\
Universidade Federal de Uberl\^andia~~~~~~~~ IMECC-UNICAMP\\
38.400-902 -- Uberl\^andia -- Brazil~~~~~~~~~~~~ 13.083-859 - Campinas -- Brazil\\
e-mail: botelho@ufu.br ~~~~~~~~~~~~~~~~~~~~~~~~~e-mail: leodan.ac.t@gmail.com

\end{document}